%\documentclass{amsart}
%talvez enviar a Adv. Appl. Maths.
\documentclass[11pt,twoside]{amsart}
\usepackage{url}
\usepackage{amssymb,amsthm,amsfonts,amstext,amsmath}
\usepackage{newcent}       % selects Times Roman as basic font
\usepackage{helvet}         % selects Helvetica as sans-serif font
\usepackage{courier}        % selects Courier as typewriter font
\usepackage{graphicx}
\usepackage{enumerate}\usepackage{mathrsfs}
\usepackage[charter]{mathdesign}
\usepackage{XCharter}
\usepackage[colorlinks=true,linkcolor=blue,citecolor=magenta]{hyperref}
\usepackage{bbm}
\usepackage{a4wide}
%a4 rodrigo
\usepackage{geometry}
\geometry{
	a4paper,
	total={170mm,257mm},
	left=20mm,
	top=20mm,
}
\usepackage{multicol}
\numberwithin{equation}{section}
\usepackage{color}
\usepackage{pstricks}
\usepackage{float}
\usepackage{mathrsfs}
\usepackage{tikz}
\usepackage{comment}

\newtheorem{theorem}{Theorem}[section]
\newtheorem{lemma}[theorem]{Lemma}

\newtheorem{remark}[theorem]{Remark}
\newtheorem{definition}{Definition}
\newcommand{\mc}[1]{{\mathcal #1}}
\newcommand{\mf}[1]{{\mathfrak #1}}

\newcommand{\bb}[1]{{\mathbb #1}}

\newcommand{\<}{\langle}
\renewcommand{\>}{\rangle}

\newcommand{\mcb}[1]{{\mathcalboondox #1}}

\usepackage{tikz}
\usepackage{tikz-3dplot}
\usetikzlibrary{calc, shapes, backgrounds,arrows.meta,patterns}
\usetikzlibrary{decorations.markings,positioning}

%%%%arco tikz centro, ângulo inicial, final, raio
\def\centerarc[#1](#2)(#3:#4:#5){\draw[#1] ($(#2)+({#5*cos(#3)},{#5*sin(#3)})$) arc (#3:#4:#5);}
\DeclareFontFamily{U}{BOONDOX-calo}{\skewchar\font=45 }
\DeclareFontShape{U}{BOONDOX-calo}{m}{n}{
  <-> s*[1.05] BOONDOX-r-calo}{}
\DeclareFontShape{U}{BOONDOX-calo}{b}{n}{
  <-> s*[1.05] BOONDOX-b-calo}{}
\DeclareMathAlphabet{\mathcalboondox}{U}{BOONDOX-calo}{m}{n}
\SetMathAlphabet{\mathcalboondox}{bold}{U}{BOONDOX-calo}{b}{n}
\DeclareMathAlphabet{\mathbcalboondox}{U}{BOONDOX-calo}{b}{n}

%%%%%%%%%%%% to indent the contents table %%%%%%%%%%%%%%%%%%%%%%%%%%%%
\let\oldtocsection=\tocsection
\let\oldtocsubsection=\tocsubsection
\let\oldtocsubsubsection=\tocsubsubsection
\renewcommand{\tocsection}[2]{\hspace{0em}\oldtocsection{#1}{#2}}
\renewcommand{\tocsubsection}[2]{\hspace{1em}\oldtocsubsection{#1}{#2}}
\renewcommand{\tocsubsubsection}[2]{\hspace{2em}\oldtocsubsubsection{#1}{#2}}
\DeclareRobustCommand{\SkipTocEntry}[5]{}
%%%%%%%%%%%%%%%%%%%%%%%%%%%%%%%%%%%%%%%%%%%%%%%%%%%%%%%%%%%%%%%%%%%%%%%%%%%%%

\definecolor{deeppink}{rgb}{1.0, 0.08, 0.58}

\usepackage{hyperref}
\hypersetup{
	colorlinks=true,
	linkcolor=blue,
	filecolor=magenta,      
	urlcolor=magenta,
}

\usepackage[most]{tcolorbox}

\newtcolorbox{Box2}[2][]{
	lower separated=false,
	colback=white,
	colframe=black,fonttitle=\bfseries,
	colbacktitle=black,
	coltitle=white,
	enhanced,
	attach boxed title to top left={yshift=-0.075in,xshift=0.15in},
	boxed title style={boxrule=0pt,colframe=white,},
	title=#2,#1}

\begin{document}
	
\title[Scaling limits for Rudvalis card shuffles]{Scaling limits for Rudvalis card shuffles}

\author{P. Gon\c calves}
\address[P. Gon\c calves]{Center for Mathematical Analysis, Geometry and Dynamical Systems, Instituto Superior T\'{e}cnico, Universidade de Lisboa, 1049-001 Lisboa, Portugal}
\email{pgoncalves@tecnico.ulisboa.pt}
\urladdr{\url{https://patriciamath.wixsite.com/patricia}}

\author{M. Jara}
\address[M. Jara]{Instituto de Matem\'atica Pura e Aplicada, Estrada Dona Castorina 110, 22460-320
	RIO DE JANEIRO, BRAZIL}
\email{mjara@impa.br}
\urladdr{\url{http://w3.impa.br/~monets/index.html}}

\author{R. Marinho}
\address[R. Marinho]{Universidade Federal de Santa Maria, Campus Cachoeira do Sul, Rod. Taufik Germano, 3013, 96503-205, Cachoeira do Sul, Brasil}
\email{rodrigo.marinho@ufsm.br}
\urladdr{\url{https://marinhor.weebly.com}}

\author{D. Moreira}
\address[D. Moreira]{Center for Mathematical Analysis, Geometry and Dynamical Systems, Instituto Superior T\'{e}cnico, Universidade de Lisboa, 1049-001 Lisboa, Portugal}
\email{david.tadeu.98@tecnico.ulisboa.pt}

\begin{abstract}
We consider the Rudvalis card shuffle and some of its variations that were introduced by Diaconis and Saloff-Coste in \cite{symmetrized}, and we project them to some stochastic interacting particle system. For the latter, we derive the hydrodynamic limits  and we study the equilibrium fluctuations. Our results show that, for these shuffles, when we consider an asymmetric variation of the Rudvalis shuffle,  the hydrodynamic limit in Eulerian scale is given in terms of a transport equation with a constant that depends on the exchange rates of the system; while for  symmetric and weakly asymmetric variations of this shuffle, in diffusive time scale,  the evolution is given by the solution of a martingale problem.
\end{abstract}

\keywords{Rudvalis Card shuffle, Dynkin's martingale,  hydrodynamic limits, fluctuations, SPDEs}

\maketitle

%\tableofcontents

\section{Introduction}\label{introduction}

Fluid dynamics  are commonly modelled by stochastic interacting particle systems. With these models it may be possible to derive partial differential equations as the  limits, in some sense, of some observables of the system that evolve both in space and  time. Those are called \textit{scaling limits}~\cite{claudios}.

One of the most famous stochastic interacting  particle systems is the symmetric simple exclusion process (SSEP), where particles perform symmetric nearest-neighbor random walks on a graph with the exclusion rule, which forbids more than one particle at the same vertex. When the graph is the finite cycle $\bb Z_n=\bb Z/n \bb Z$, for instance, the total mass (number of particles) is conserved and one can recover the heat equation on the torus $\bb T=[0,1)$ as the limit, in probability, of the \textit{empirical measure}, which is a measure on $\bb T$ obtained by re-scaling time  diffusively  as $n^2$, re-scaling space by $n^{-1}$, and assigning mass $n^{-1}$ to each particle. The limit of the empirical measure is known as the \textit{hydrodynamic limit} and it can be seen as a Law of Large Numbers for the density of particles. A more challenging task is to prove a central limit theorem from which one can observe the equilibrium fluctuations of the process.

An interesting fact is that the SSEP can be obtained as a projection of a very famous card shuffle known as the \textit{interchange process}. In order to see this projection, one just has to color some cards in black, the remaining ones in red, and to associate black cards with particles and red cards with empty sites. This projection and its scaling limits were used in \cite{lacoin} and in \cite{wilson} to study the mixing times of both processes, i.e.~ the time at which the total variation distance between the law of the process and the stationary measure is smaller than some threshold $\varepsilon>0$, both in the interval and in the cycle. Inspired by this, in this work we  color the cards of a deck with black and red colours, but we study the corresponding stochastic interacting particle system obtained when we perform a peculiar card shuffle, the \textit{Rudvalis shuffle}. This card shuffle was introduced by Arunas Rudvalis in \cite{diaconis} and it can be explained as follows: remove the card on the top of a deck and insert it back, uniformly at random, either  at the bottom or at the second position from the bottom (see Figure~\ref{fig}).
\begin{figure}[H]
	\begin{center}
		\includegraphics[scale=0.10]{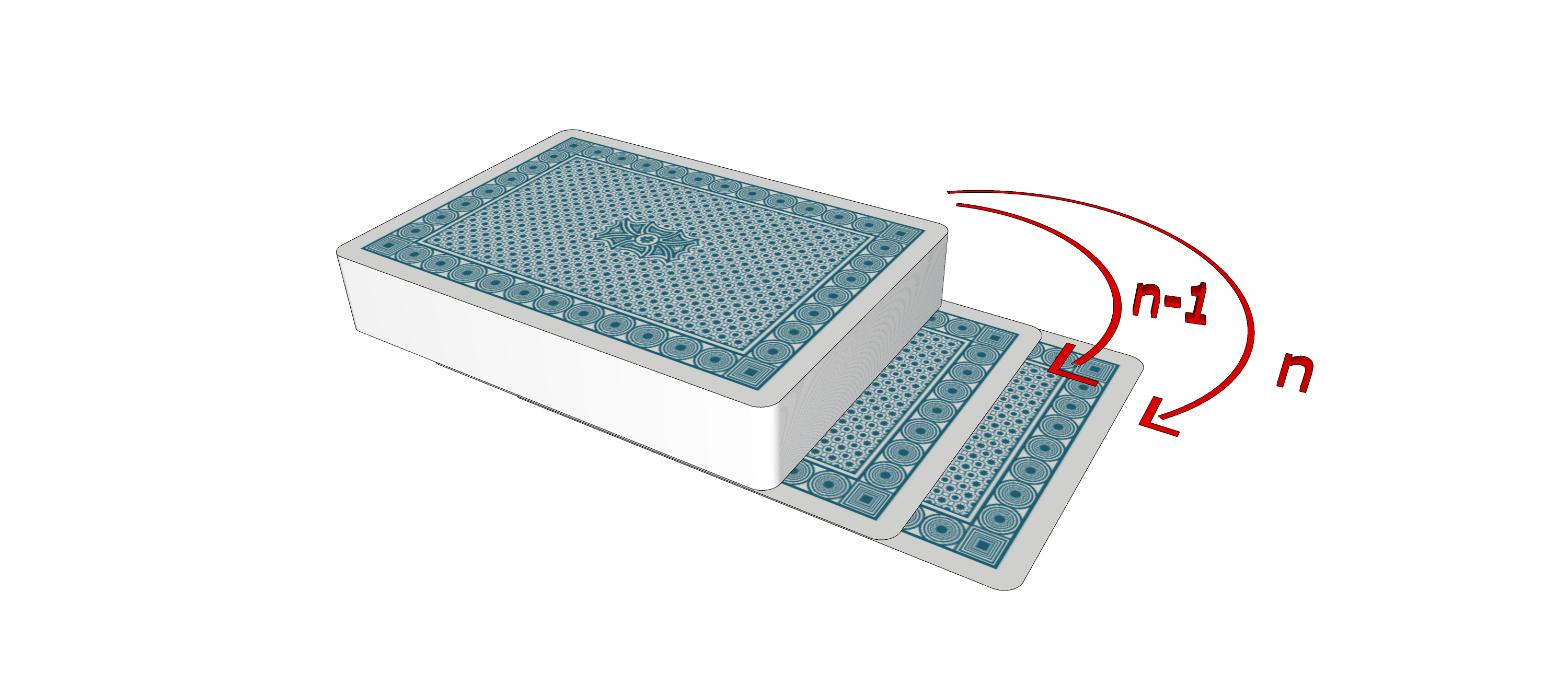}
		\caption{Insertion of the top card at position $n-1$ or $n$.}
		\label{fig}
	\end{center}
\end{figure}
Lower and upper bounds on the mixing time for the Rudvalis card shuffle were obtained in~\cite{wilson2} and in~\cite{hildebrand}. Matching bounds are  still unknown. In the case of the interchange process, the knowledge about the hydrodynamic limit and the fluctuations was crucial to determine the lower and upper bounds of the mixing time. Motivated by this, in this work we investigate the hydrodynamic behavior and the equilibrium fluctuations of the Rudvalis card shuffle and some of its variations. In~\cite{symmetrized} for instance, Diaconis and Saloff-Coste introduced a symmetric version of the Rudvalis shuffle where one inserts the top card at the bottom of the deck  with probability $1/4$, or inserts the bottom card at the top  of the deck with probability $1/4$,  or transposes the cards at positions $1$ and $2$ with probability $1/4$,  or nothing is done with probability $1/4$. In order to obtain distinct hydrodynamic behaviours for the associated stochastic interacting particle systems,  we introduce a continuous-time random walk on the symmetric group from which not only  we can get the symmetric and asymmetric variations of the Rudvalis card shuffle, but also a weakly asymmetric one. We prove that, in the Eulerian  time scale, the hydrodynamics and the equilibrium fluctuations of the projection of the asymmetric Rudvalis shuffle are driven by a transport equation with periodic boundary conditions. Furthermore, we show that, in the diffusive time scale, hydrodynamics and fluctuations for the symmetric and weakly asymmetric variations are driven by a solution of a non-trivial martingale problem. In~\cite{kbottom}, Goel introduced a variation of the Rudvalis card shuffle where one puts the top card uniformly at random in the bottom-$k$ positions of the deck. For this reason, this card shuffle is known as the Top to bottom-$k$ shuffle. For the top-to-bottom $k$-shuffle, our methodology yields the  hydrodynamic limits when $k=o(n)$ and the equilibrium  fluctuations when $k=o(\sqrt{n})$.

This work is organized as follows: in Section~\ref{s2} we introduce the continuous-time card shuffle and we map it to a stochastic  interacting particle systems. Moreover, in that section we state our main results, and to do so, we introduce some notation and definitions as Sobolev spaces, the empirical measure, the density fluctuation field, some weak solutions of partial differential equations and some martingale problems. In Section \ref{s3}  we prove the hydrodynamic limits in the Eulerian and diffusive time scales,  and in Section \ref{s5}  we obtain the scaling limit of  the equilibrium fluctuations in both time scales. 
In Section \ref{sec5}, we prove the uniqueness of the solution of the martingale problem obtained in diffusive time scale. 

\section{Notation and results\label{s2}}

\subsection{The Generalized Rudvalis shuffle}
Let $n\in\mathbb N$ be a scaling parameter and fix four sequences of real numbers $\{a_n\}_{n},\{b_n\}_n,\{c_n\}_n,\{d_n\}_n  \in [0,1]$ converging, as $n \to \infty$, respectively, to $a,b,c,d $.  
Let $S_n$ denote the symmetric group. $S_n$ must be understood as the set of all possible permutations of a deck with $n$ cards. Given a permutation $\sigma \in S_n$, we denote the card at position $x$ by $\sigma(x)$. Given $\sigma\in S_n$ and $x<y$, if we denote $\sigma=(\sigma(1),\sigma(2),\cdots,\sigma(x),\cdots, \sigma(y),\cdots, \sigma(n))\in S_n$, we define $\sigma^{x \rightarrow y}$ as the permutation obtained from $\sigma$ after we remove the card at position $x$ from the deck and insert it at position $y$, i.e.~  $$\sigma^{x \rightarrow y}=(\sigma(1),\sigma(2),\cdots, \sigma(x-1),\sigma(x+1),\cdots, \sigma(y),\sigma(x),\sigma(y+1),\cdots, \sigma(n)),$$
while for $x>y$ and $\sigma=(\sigma(1),\sigma(2),\cdots,\sigma(y),\cdots, \sigma(x),\cdots, \sigma(n))$ we define 
$$\sigma^{x \rightarrow y}=(\sigma(1),\sigma(2),\cdots, \sigma(y-1),\sigma(y),\sigma(x),\sigma(y+1),\cdots, \sigma(x-1),\sigma(y+1),\cdots, \sigma(n)).$$
We also define  $\sigma^{x \leftrightarrow y}$ as  the permutation  obtained after we transpose the cards at positions $x$ and $y$, i.e.~
for $\sigma=(\sigma(1),\sigma(2),\cdots,\sigma(x-1),\sigma(x),\sigma(x+1),\cdots, \sigma(y-1),\sigma(y),\sigma(y+1),\cdots, \sigma(n))$, we define 
 $$\sigma^{x \leftrightarrow y}=\sigma=(\sigma(1),\sigma(2),\cdots,\sigma(x-1),\sigma(y),\sigma(x+1),\cdots, \sigma(y-1),\sigma(x),\sigma(y+1),\cdots, \sigma(n)).$$

 We  consider the continuous-time Markov chain $\{ \sigma_t \}_{t \geq 0 }$ with state space $S_n$ and generator $\mcb L_n^{\spadesuit} $ given on $f: S_n \to \bb R$ and $\sigma\in S_n$ by
\begin{flalign}\label{generator}
\mcb L_n^{\spadesuit} f(\sigma)= a_n \left( f(\sigma^{1 \rightarrow n-1})- f(\sigma)\right) + b_n \left( f(\sigma^{1 \rightarrow n})- f(\sigma)\right) + c_n \left( f(\sigma^{n \rightarrow 1})- f(\sigma)\right) +d_n \left( f(\sigma^{1 \leftrightarrow 2})- f(\sigma)\right)
\end{flalign}
 and we call $\{ \sigma_t\}_{ t \geq 0 }$ the \textit{generalized Rudvalis card shuffle}.

\begin{remark}\label{remark}
We observe that for $a_n=b_n=1/2$ and $c_n=d_n=0$ we obtain the original Rudvalis shuffle introduced in~\cite{diaconis}. Similarly, when $a_n=0$ and $b_n=c_n=d_n=1/4$ we obtain the symmetric version of the Rudvalis  shuffle introduced in~\cite{symmetrized}. In~\cite{wilson2} it is also considered the case $a_n \in (0,1)$, $b_n=1-a_n$ and $c_n=d_n=0$.
\end{remark}

Our aim is to project the generalized Rudvalis shuffle to a stochastic  interacting particle system, where the latter is defined as a projection of the colours of the cards. Let $\bb Z_n=\bb Z/ n \bb Z$ and let  us colour each card of the deck in either red or black. For each permutation $\sigma$ and each $x,y \in \bb Z_n$, we define the colouring  $\eta=\eta(\sigma)$ by
$$
\eta(x)=\begin{cases}
1 &, \text{ if the card } \sigma(x) \text{ is black},\\
0 &, \text{ if the card } \sigma(x) \text{ is red}.
\end{cases}
$$
Let $\Omega_n=\{0,1\}^{\bb Z_n}$. For each $x, y \in \bb Z_n$ define $\eta^{x \rightarrow y}=\eta(\sigma^{x \rightarrow y})$ and $\eta^{x \leftrightarrow y}=\eta(\sigma^{x \leftrightarrow y})$. The projection defined above allows mapping black cards to particles and red cards to holes. From  \eqref{generator}, we see that  $\{\eta_t \}_{t \geq 0 }$ has an infinitesimal generator given on $f: \Omega_n \to \bb R$ and $\eta\in\Omega_n$ by
\begin{flalign*}%\label{generator2}
\mcb L_n f(\eta)= a_n \left( f(\eta^{1 \rightarrow n-1})- f(\eta)\right) + b_n \left( f(\eta^{1 \rightarrow n})- f(\eta)\right) + c_n \left( f(\eta^{n \rightarrow 1})- f(\eta)\right) +d_n \left( f(\eta^{1 \leftrightarrow 2})- f(\eta)\right).
\end{flalign*}
We will consider the process speeded up in the time scale $tn^\beta$ with $\beta>0$ and the generator of $\{\eta_{tn^\beta}\}_{t\geq0}$ is simply given by $n^\beta\mcb L_n$. We will consider two cases, the hyperbolic time scale, corresponding to the choice $\beta=1$ and  the diffusive time scale, corresponding to the choice $\beta=2$.

\subsection{Stationary measures}
We assume that  $a_n,b_n,c_n,d_n$ are chosen in such a way that the  processes $\{\sigma_t\}_{  t \geq 0 }$ and $\{\eta_t\}_{  t \geq 0 }$ are irreducible and in those cases, the uniform measure on $S_n$ will be the stationary measure of the card shuffle $\{\sigma_t\}_{  t \geq 0 }$. Therefore,  if the initial number of particles $\ell$ is fixed, then the uniform measure on the hyperplane $\Omega_{n,\ell}:=\{ \eta \in \Omega_n ; \sum_{x \in \bb Z_n} \eta(x)=\ell \}$ is the stationary measure of the particle system.

For $\varrho\in [0,1]$, let $\nu_\varrho$ be the Bernoulli product measure with parameter $\varrho $, that is, for each $\eta \in \Omega_n$,
\begin{flalign}\label{bernoulli}
\nu_\varrho (\eta)=\prod_{x \in \bb Z_n} \left\{ \varrho \eta(x) +(1-\varrho)(1-\eta(x)) \right\}.
\end{flalign}
It is straightforward to check that $\nu_\varrho$ is stationary for any $\varrho \in [0,1]$.
In order to define the space where our functionals will act, we need to introduce the so-called Sobolev spaces. 

\subsection{Sobolev Spaces}

Let $\bb T$ be the unit torus.  For $f,g:\bb T \to \bb C$, let  $\left<f,g \right> $ denote the inner product in $L^2(\bb T)$, which is given by $$\left<f,g \right>=\int_{\bb T} f(u) \overline{g(u)} du.$$ For each $k \in \bb Z$ let $\psi_k:\bb T \to \bb C$ be defined by
\begin{flalign}\label{psi}
	\psi_k(u)=\begin{cases}
		\sqrt{2} \sin (2 \pi k u) &, \text{ if } k<0,\\
		1 &, \text{ if } k=0,\\
		\sqrt{2} \cos (2 \pi k u) &, \text{ if } k>0.
	\end{cases}
\end{flalign}
for any $u \in \bb T$. The family $\{ \psi_k \}_{ k \in \bb Z }$ forms an orthonormal basis of $L^2(\bb T)$. Note that the functions $\psi_k$ are eigenfunctions of the positive symmetric linear operator $\mcb L=(1-\Delta)$, i.e.~ for each $k \in \bb Z$,
$
\mcb L \psi_k=\gamma_k \psi_k,
$
with $\gamma_k=1+4 \pi^2 k^2$.
Now we are ready to define Sobolev spaces.  For each positive integer $m$, we denote by $\mcb H_{m}$ the Hilbert space obtained as the completion of $C^\infty(\bb T)$ endowed with the inner product $\left\< \cdot,\cdot\right\>_{m}$ defined by
$$
\left\< f,g\right\>_{m}=\left\< f,\mcb L^m g \right\>.
$$
It is easy to verify that $\mcb H_m$ is the subspace of $L^2(\bb T)$ consisting of all functions $f$ such that
$$
\sum_{k \in \bb Z} |\left\< f, \psi_k \right\>|^2 \gamma_k^m<\infty.
$$

For each positive integer $m$, we denote by $\mcb H_{-m}$ the dual of $\mcb H_m$ with respect to the inner product  $\langle\cdot, \cdot \rangle$. It  is easy to check  that $\mcb H_{-m}$ consists of all sequences $\{ \left\<f,\psi_k \right\>\}_{  k \in \bb Z }$ such that
$$
\sum_{k \in \bb Z} |\left\< f, \psi_k \right\>|^2 \gamma_k^{-m}<\infty
$$
and that the inner product $\left\< f,g\right\>_{-m}$ of two functions $f,g \in \mcb H_{-m}$ can be written as
\begin{equation}\label{eq:norm-}
\left\< f,g \right\>_{-m}= \sum_{k \in \bb Z} \left\< f, \psi_k \right>  \left\< g, \psi_k \right> \gamma_k^{-m}.
\end{equation}
If we denote $L^2(\bb T)$ by $\mcb H_0$, then
from the explicit characterization of $\mcb H_{-m}$, we obtain the following inclusion of spaces
$$
\cdots \subset \mcb H_2 \subset \mcb  H_1 \subset \mcb H_0 \subset \mcb H_{-1} \subset \mcb H_{-2} \subset \cdots
$$

\subsection{Empirical measure}

We note that the number of particles (or the numbers of black cards) is conserved  by the dynamics  of $\{\eta_t\}_{t\geq 0}$. Our first question is related to understanding how does  the distribution of particles evolve in space and time.   To properly answer this question we need to define the empirical measure associated to the conserved quantity, i.e.~ the number of particles. To that end, 
let $\bb T$ be the unit torus and  fix a time horizon $[0,T]$, with $T>0$. We denote by $\mcb  M$  the space of positive measures on $\bb T$ with total mass bounded by one, endowed with the weak topology.

For each $t \in [0,T]$, let $\pi_t^{n,\beta} \in \mcb M$ be the empirical measure  on $\bb T$ obtained by rescaling space by $n^{-1}$, and assigning mass $n^{-1}$ to each particle (black card), that is,
\begin{flalign*}
\pi_t^{n,\beta}(\eta,du):=\frac{1}{n}\sum_{x\in \bb Z_n}  \eta_{tn^\beta}(x)\delta_{\tfrac xn}\, (du),
\end{flalign*}
where $\delta_u$ is the Dirac measure concentrated on $u\in\mathbb T$. For convenience of notation,  we denote the integral of  $f:\bb T \to \mathbb R$ with respect to $\pi_t^{n,\beta}$ as
$
\langle \pi_t^{n,\beta},f\rangle$.
Let $\{\mu_n\}_{n\in\bb N}$ be a sequence of probability measures in $\Omega_n$. For  $n \in \bb N$, let $\mathbb{P}_{\mu_n}$ be the probability measure on the Skorokhod space $\mcb {D}([0,T],\Omega_n)$ induced by  $\{\eta_{tn^\beta}\}_{t \geq 0}$ and the initial measure $\mu_n$ and  let $\mathbb{E}_{\mu_n}$ be the expectation with respect to $\mathbb{P}_{\mu_n}$. Let $\{\bb {Q}_{n,\beta} \}_{n\in\bb N}$ be the sequence of probability measures on $\mcb{D} ([0,T],  \mcb {M})$ induced by $\{\pi_{t}^{n,\beta}\}_{t \geq 0}$ and $\mathbb{P}_{\mu_n}$.

\begin{definition}\label{associated profile}
Let $\mathfrak h: \mathbb{R}\rightarrow[0,1]$ be a measurable function and $\{\mu_n\}_{n\in\
bb N}$ a sequence of probability measures in $\Omega_n$. We say that $\{\mu_n\}_{n\in \bb N}$  is \textit{associated with $\mathfrak h$}, if for any continuous function $G:\mathbb T\rightarrow \mathbb R$ and any $\delta > 0$, 
\begin{equation*}
\lim _{n\to \infty } \mu_n \Big( \eta \in \Omega_n : \Big|  \frac{1}{n} \sum_{x\in \bb Z_n} G(\tfrac{x}{n}) \eta(x)   - \int_{\mathbb{R}} G(u)\mathfrak h(u)\,du \, \Big|    > \delta \Big)= 0.
\end{equation*} 
\end{definition}

\subsection{Hydrodynamic limit}

In order to state our first result, we introduce the notion of weak solution of a transport equation on $\bb T$:
\begin{definition}\label{def_transport}
Let $\mathfrak g :\bb T \to [0,1]$ be a measurable function. Let $\theta \in \bb R$. We say that $\varrho^\theta: [0,T] \times \bb T \to [0,1]$ is a weak solution of the transport equation
	\begin{flalign}\label{transp_equation}
	\begin{cases}
	\partial_t \varrho^\theta (t,u) = \theta \nabla \varrho^\theta(t,u) &,\forall u \in \bb T; \\
	\varrho(0,u)=\mathfrak g(u) &, \forall u \in \bb T.
	\end{cases}
	\end{flalign}
	if for all $f \in C^{1}(\bb T)$ and all $t \in[0,T]$ we have
$$
	0=\left\< \varrho^\theta_t  - \mathfrak g , f\right\>  +\theta \int_0^t \langle  \varrho_s^\theta ,\nabla f\rangle \,ds.
$$
\end{definition}
Equation \eqref{transp_equation} has a unique weak solution given by $\varrho_t^\theta(u)=\varrho_0^\theta(u+\theta t)$. 
Now we state the  \textit{hydrodynamic limit} in the hyperbolic time scale ($\beta=1)$:

\begin{theorem}\label{hydrodynamic limit 1}
Let $\beta=1$ and $\varrho_0: \bb T \to [0,1]$ be a measurable function.    Let $\{\mu_n\}_{n\ \geq 1}$ be a sequence of probability measures in $\Omega_n$ associated to the profile $\varrho_0$. 
Assume that \begin{equation}\label{def_kappa}
\lim_{n\to+\infty}|a_n+b_n-c_n|=\kappa:=a+b-c\neq 0.
\end{equation}
Then, for any $t \in [0,T]$, the sequence  $\{{\bb Q}_{n,1}\}_{ n \in \bb N}$ converges, as $n\to+\infty$, with respect to the weak topology of $\mcb D ([0,T],\mcb H_{-m})$, for $m>3/2$, to ${\bb Q}_{1}$, which is a Dirac measure on the trajectory $\pi^1_t(du)=\varrho^\kappa(t,u)du$, where $\varrho^\kappa$ is the unique weak solution of  (\ref{transp_equation}) with $\theta=\kappa$.
\end{theorem}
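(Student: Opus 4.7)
My proof plan follows the standard three-step strategy for hydrodynamic limits: tightness of $\{\bb{Q}_{n,1}\}$, characterisation of limit points via Dynkin's martingale, and uniqueness of the limiting PDE.

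For the characterisation step, for each test function $f\in C^1(\bb T)$ I would introduce the Dynkin martingale
\[
M^{n,f}_t := \langle \pi^{n,1}_t, f\rangle - \langle \pi^{n,1}_0, f\rangle - \int_0^t n\,\mcb L_n \langle \pi^{n,1}_s, f\rangle\, ds,
\]
and compute $n\,\mcb L_n\langle \pi^{n,1}, f\rangle$ explicitly. The key calculation is a discrete summation by parts applied to each of the four elementary moves: writing the action of each move as a cyclic shift plus a correction at positions $1,2,n-1,n$, and using that for a periodic $C^1$ function $f(\frac{x\pm 1}{n})-f(\frac{x}{n})=\pm\frac{1}{n}f'(\frac{x}{n})+O(\frac{1}{n^2})$, I obtain
\[
n\,\mcb L_n \langle \pi^{n,1}, f\rangle \;=\; -(a_n+b_n-c_n)\,\langle \pi^{n,1}, \nabla f\rangle \;+\; R^n(\eta,f),
\]
with $\sup_\eta |R^n(\eta,f)| \leq C(f)/n$; the $d_n$-transposition and the boundary contributions from the four distinguished sites are all absorbed in $R^n$. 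By hypothesis $a_n+b_n-c_n\to \kappa$, so the compensator of $M^{n,f}$ converges pointwise in $t$ to $-\kappa\int_0^t\langle \pi_s,\nabla f\rangle\,ds$.

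Next I would estimate the quadratic variation. Each elementary move changes at most three coordinates of $\eta$, so each jump changes $\langle \pi^{n,1}, f\rangle$ by $O(\frac{1}{n})$, and the total jump rate is $O(n)$ in the hyperbolic scaling, giving $\bb E_{\mu_n}[\langle M^{n,f}\rangle_T]=O(\frac{1}{n})$; by Doob's $L^2$-inequality $\sup_{t\leq T}|M^{n,f}_t|\to 0$ in $L^2(\bb P_{\mu_n})$. For tightness in $\mcb D([0,T],\mcb H_{-m})$ with $m>3/2$ I would invoke Mitoma's criterion, reducing the question to tightness of $\{\langle \pi^{n,1}_\cdot,\psi_k\rangle\}_n$ in $\mcb D([0,T],\bb R)$ for every $k$; for each fixed $k$ this follows from Aldous' criterion applied to the martingale decomposition, using the uniform-in-$\eta$ bound on $n\,\mcb L_n\langle \pi^{n,1},\psi_k\rangle$ obtained above. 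Combining the three ingredients, every limit point $\bb Q^\ast$ is supported on trajectories satisfying
\[
\langle \pi_t, f\rangle \;=\; \langle \pi_0, f\rangle \;-\; \kappa \int_0^t \langle \pi_s, \nabla f\rangle\, ds \qquad\text{for all } f\in C^1(\bb T);
\]
the initial condition $\pi_0(du)=\varrho_0(u)\,du$ comes from Definition \ref{associated profile}, and uniqueness of the weak solution of \eqref{transp_equation} (recalled right after Definition \ref{def_transport}) identifies $\bb Q^\ast$ with the Dirac mass on $t\mapsto\varrho_0(u+\kappa t)\,du$, so the whole sequence converges.

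The main obstacle is the bookkeeping in the generator computation: the sites $1,2,n-1,n$ carry the non-translation-invariant part of the dynamics, so one cannot invoke a clean translation-invariant summation-by-parts identity and must track each special site by hand. For a $C^1$ periodic $f$ each boundary term is a product of an $O(\frac{1}{n})$ difference of $f$-values (using periodicity $f(1)=f(0)$ together with a Taylor expansion at $0$) and a bounded occupation variable, and is therefore absorbed in $R^n$; nevertheless, carrying out this estimate rigorously and uniformly in the basis functions $\psi_k$, as required to close the Mitoma/Aldous argument, is the most delicate part of the proof.
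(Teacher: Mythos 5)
Your proposal is correct and follows essentially the same route as the paper: the same Dynkin martingale decomposition, the same generator computation isolating $-(a_n+b_n-c_n)\langle\pi^{n,1},\nabla f\rangle$ plus $O(1/n)$ corrections from the distinguished sites, the vanishing of the martingale via its $O(1/n)$ quadratic variation, and tightness reduced to the basis functions $\psi_k$ with the threshold $m>3/2$ coming from summability of $\sum_k\gamma_k^{-m}\langle\nabla\psi_k,\nabla\psi_k\rangle$. One small imprecision: the insertion moves $1\to n-1$, $1\to n$, $n\to 1$ are block cyclic shifts that change $O(n)$ coordinates of $\eta$, not at most three; the jump of $\langle\pi^{n,1},f\rangle$ is nevertheless $O(1/n)$ because it equals $\tfrac{1}{n}\sum_x\eta(x)\bigl(f(\tfrac{x\pm1}{n})-f(\tfrac{x}{n})\bigr)$ up to boundary terms, which is exactly the computation the paper performs in the proof of Lemma \ref{martingale limit}.
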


We observe that since the limiting equation is a transport equation, we could perform a Galilean transformation at the level of the empirical measure and, in the hyperbolic time scale, we would  have a trivial evolution (which also happens if $\kappa=0$). 	In that sense, we need to speed up the time scale in order to see a non-trivial limit. In fact, the trivial behavior goes up to $\beta<2$. Therefore, we consider now the diffusive time scale corresponding to $\beta=2$. In this regime the limit is not longer deterministic since the contribution of the martingale term is relevant and brings an additional term to the equation.  Let us now define properly the notion of solution to the equation that we obtain in this regime.

\begin{definition}\label{def:mart_problem}
Let $\nu,\sigma>0$ and $\mcb v\in\mathbb R$. Fix $m >0$.  We say that a process $\{X_t\}_{ t \geq 0}$ with continuous trajectories in $\mcb  H_{ -m}$ is a solution of the \emph{martingale problem} associated to
 	\begin{equation}
\label{arica}
\partial_t X_t = \nu \Delta X_t + \mcb v \nabla X_t + \sigma\nabla X_t d \mc B_t,
\end{equation}
where  $\{ \mc B_t\}_{t \geq 0}$ is  a standard Brownian motion,
 if for every $f \in  C^\infty(\bb T)$ the process $\{\mcb M_t(f)\}_{ t \geq 0}$ given by
\[
\mcb M_t(f) := X_t(f) - X_0(f) - \int_0^t X_s(  \nu \Delta f - \mcb v \nabla f) ds  
\]
for every $t \geq 0$, is a martingale with respect to the filtration $\mathcal F_t:=\sigma \{ X_s(g);\,\,s \leq t \text{ and } g \in C^\infty(\bb T)  \}$ whose quadratic variation is given by
\begin{equation}\label{QVofmp}
\< \mcb M(f) \>_t := \int_0^t  \sigma^2 X_s(\nabla f)^2 ds.
\end{equation}
\end{definition}
Since our proofs combine tightness and the unique characterization of the limit point, the next result is crucial in what follows. We did not find in the literature the proof of this result, therefore we present it in Section \ref{sec5}.
\begin{theorem}
\label{theo:uni_mp} 
Any two  solutions of the martingale problem  associated to \eqref{arica} starting from the same initial condition have the same law. 
\end{theorem}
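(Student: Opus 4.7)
The plan is to extract from the family $\{\mcb M_t(f)\}_f$ of martingales in Definition~\ref{def:mart_problem} a single one-dimensional Brownian motion driving the noise, translate the martingale problem into the weak formulation of an SPDE, and then diagonalize via the Fourier basis $\{\psi_k\}_{k\in\bb Z}$ of $L^2(\bb T)$ to reduce the problem to a countable collection of two-dimensional linear SDEs driven by this common Brownian motion, to which the classical strong-uniqueness theory for Lipschitz SDEs applies.

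The first step is to extract the driving Brownian motion. Polarization of \eqref{QVofmp} applied to $\mcb M(f+g)$ yields the cross-variations
\begin{equation*}
\langle \mcb M(f), \mcb M(g)\rangle_t = \int_0^t \sigma^2\, X_s(\nabla f)\, X_s(\nabla g)\, ds,\qquad f,g\in C^\infty(\bb T),
\end{equation*}
whose product structure shows that the entire family is driven by a single scalar noise. By the standard representation of continuous martingales with absolutely continuous quadratic variation as stochastic integrals against Brownian motion (on a possibly enlarged probability space, following the Dambis--Dubins--Schwarz construction), there exists a standard Brownian motion $B$ such that $\mcb M_t(f) = \int_0^t \sigma\, X_s(\nabla f)\, dB_s$ for every $f\in C^\infty(\bb T)$.

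With $B$ in hand, testing the resulting weak formulation against $f=\psi_k$ and using $\nabla \psi_k = 2\pi k\,\psi_{-k}$ and $\Delta\psi_k = -4\pi^2 k^2\,\psi_k$, one sees that the Fourier modes $Y^k_t := X_t(\psi_k)$ satisfy, for each $k\in\bb Z$, the closed two-dimensional linear SDE
\begin{equation*}
dY^k_t = \bigl(-4\pi^2 k^2 \nu Y^k_t - 2\pi k \mcb v Y^{-k}_t\bigr)\,dt + 2\pi k \sigma Y^{-k}_t\, dB_t,
\end{equation*}
together with its symmetric counterpart for $Y^{-k}$. Since the drift and diffusion coefficients are globally Lipschitz, pathwise uniqueness gives that $(Y^k_t,Y^{-k}_t)$ is a deterministic functional of $B$ and of the initial values $(Y^k_0,Y^{-k}_0)$; the reconstruction $X_t=\sum_{k\in\bb Z} Y^k_t \psi_k$ convergent in $\mcb H_{-m}$ then implies that the joint law of $X$ in $\mcb D([0,T],\mcb H_{-m})$ is determined by the law of $B$ and by $X_0$, and thus by $X_0$ alone.

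The main obstacle lies in the first step: the random quadratic variations of the martingales $\mcb M(f)$ may degenerate on events where $X_s(\nabla f)$ vanishes, so $B$ cannot be defined by a naive normalization of any single $\mcb M(\psi_{k_0})$; it must instead be constructed on an enlarged probability space, and one needs the product structure of the cross-variations above to guarantee that the same $B$ simultaneously represents every martingale in the family. Once this extraction is carried out, the remainder of the argument is a direct application of classical Lipschitz SDE theory.
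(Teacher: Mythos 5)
Your overall route coincides with the paper's: reduce the martingale problem to a strong (integral) formulation driven by a single Brownian motion, then diagonalize in Fourier and invoke uniqueness for the resulting linear SDEs. The second half is fine; your real $2\times 2$ systems for $(Y^k,Y^{-k})$ are an equivalent variant of the paper's choice of the complex exponentials $g_k(u)=e^{2\pi i k u}$, which decouple each mode into a single scalar equation handled by It\^o's isometry and Gr\"onwall. Either version works.

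The gap is in the first step, which you correctly identify as the main obstacle but then leave as an assertion. The product structure of the cross-variations
\begin{equation*}
\langle \mcb M(f), \mcb M(g)\rangle_t = \int_0^t \sigma^2\, X_s(\nabla f)\, X_s(\nabla g)\, ds
\end{equation*}
does \emph{not} by itself ``guarantee that the same $B$ simultaneously represents every martingale in the family'': the representation theorem gives you, for each $f$ in a countable family $\mcb D=\{f_i\}$, a Brownian motion $B(f_i)$ on an enlarged space, and the cross-variation identity only forces $dB_t(f_i)=dB_t(f_j)$ on the set of times where \emph{both} $X_t(\nabla f_i)\neq 0$ and $X_t(\nabla f_j)\neq 0$. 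On the degeneracy sets $A_i=\{t:\,X_t(\sigma\nabla f_i)=0\}$ the cross-variations carry no information, and the $B(f_i)$ may genuinely differ there. The missing content is the construction of a single common Brownian motion: one must partition time according to which is the first index $i$ with $t\notin A_i$ (sets $C_i=\bigcap_{j<i}A_j\setminus A_i$, plus the residual set $C_\infty=\bigcap_i A_i$ on which an exogenous independent Brownian motion $W$ must be injected), define a candidate $\widetilde B$ by gluing the increments $X_s(\sigma\nabla f_i)^{-1}\,d\mcb M_s(f_i)$ on $C_i$ and $dW_s$ on $C_\infty$, and then verify inductively that this single $\widetilde B$ represents \emph{every} $\mcb M(f_\ell)$. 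This patching is the bulk of the paper's proof and cannot be replaced by an appeal to Dambis--Dubins--Schwarz (which concerns time-changing one martingale, not producing a common driver for a family with vanishing integrands). As written, your argument establishes the conclusion only on the event that all the integrands $X_t(\sigma\nabla f)$ are nonvanishing for all $t$, which is not guaranteed by the martingale problem.
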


Now we state our second result.

\begin{theorem}\label{hydrodynamic limit 3}
Let $\beta=2$ and $\varrho_0: \bb T \to [0,1]$ be a measurable function.    Let $\{\mu_n\}_{n\in\bb N}$ be a sequence of probability measures in $\Omega_n$ associated to the profile $\varrho_0$. 
	Assume that $c>0$ and that $$a_n+b_n-c_n=\gamma/n$$ for some constant $\gamma \in \bb R$.  Then, the sequence $\{{\bb Q}_{n,2}\}_{ n \in \bb N}$  converges, as $n\to+\infty$, with respect to  the weak topology of $\mcb  D ([0,T],\mcb  H_{-m})$, for $m>5/2$,  to $\bb Q_2$  which is concentrated on the solution of the martingale problem associated to \eqref{arica} for $\nu=c$, $\mcb v=\gamma$ and $\sigma=\sqrt {2c}$ and with initial condition $\varrho_0$.

\end{theorem}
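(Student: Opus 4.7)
The plan is to follow the classical three-step programme for hydrodynamic limits: tightness of $\{\bb Q_{n,2}\}_n$ in the Skorokhod space $\mcb D([0,T], \mcb H_{-m})$; identification of any limit point as a solution of the martingale problem associated to \eqref{arica}; and an appeal to the uniqueness statement of Theorem~\ref{theo:uni_mp} to deduce convergence to $\bb Q_2$.

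For tightness I would use Mitoma's criterion, which reduces matters to Aldous-type tightness of each scalar process $\{\<\pi^{n,2}_\cdot, \psi_k\>\}_n$ with $\psi_k$ as in \eqref{psi}, together with uniform control of the Sobolev norms. Dynkin's formula gives
\[
\<\pi^{n,2}_t, \psi_k\> = \<\pi^{n,2}_0, \psi_k\> + \int_0^t n^2\mcb L_n\<\pi^{n,2}_s, \psi_k\>\,ds + \mcb M^n_t(\psi_k),
\]
and Aldous' criterion follows from the elementary bounds $|n^2\mcb L_n \<\pi^{n,2}, \psi_k\>| \lesssim \gamma_k$ and $\<\mcb M^n(\psi_k)\>_t \lesssim \gamma_k\, t$. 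Weighting by $\gamma_k^{-m}$ and summing in $k$, these estimates are square-summable exactly when $m > 5/2$, which explains the range in the statement.

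The characterization step starts with a Taylor expansion of each of the four moves. Writing the top-to-$(n-1)$ move as a cyclic left-shift followed by the transposition $\tau^{(n-1)n}$, one obtains, for any $f \in C^\infty(\bb T)$,
\[
n^2\mcb L_n\<\pi^{n,2}, f\> = -n(a_n + b_n - c_n)\<\pi^{n,2}, \nabla f\> + \tfrac{a_n + b_n + c_n}{2}\<\pi^{n,2}, \Delta f\> + \mcb R_n(\eta, f) + o(1).
\]
The hypothesis $a_n + b_n - c_n = \gamma/n$ turns the first term into $-\gamma\<\pi^{n,2}, \nabla f\>$ and forces $a + b = c$ in the limit, so the second term converges to $c\<\varrho, \Delta f\>$. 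The remainder
\[
\mcb R_n(\eta, f) = a_n\,\nabla f(0)\,[\eta(n) - \eta(1)] + d_n\,\nabla f(0)\,[\eta(1) - \eta(2)]
\]
collects the boundary contributions of the transpositions $\tau^{(n-1)n}$ and $\tau^{12}$. A second application of Dynkin's formula to $\<\pi^{n,2}, f\>^2$ produces the quadratic variation
\[
\<\mcb M^n(f)\>_t = \int_0^t (a_n + b_n + c_n)\<\pi^{n,2}_s, \nabla f\>^2\,ds + o(1)\;\xrightarrow{n\to\infty}\;\int_0^t 2c\,\<\varrho_s, \nabla f\>^2\,ds,
\]
which matches \eqref{QVofmp} with $\sigma = \sqrt{2c}$.

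The hard part will be the boundary replacement $\int_0^t \mcb R_n(\eta_{sn^2}, f)\,ds \to 0$ in probability. Pointwise in $s$ the observables $\eta(n) - \eta(1)$ and $\eta(1) - \eta(2)$ are of order one, but they are centred under every product measure $\nu_\varrho$ from \eqref{bernoulli}. The idea is to exploit the hypothesis $c > 0$: the cyclic shifts occur at positive rate and, in the diffusive clock, mix the three distinguished sites on time scale $O(n^{-2})$. I would make this precise by a Kipnis--Varadhan-type variational estimate against the symmetric part of $n^2\mcb L_n$ in $L^2(\nu_\varrho)$, exhibiting explicit local correctors supported on $\{1, 2, n\}$ that witness the required $H^{-1}$ bound. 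Once the boundary replacement is in place, tightness together with the drift and quadratic-variation computations shows that every limit point of $\{\bb Q_{n,2}\}_n$ is concentrated on solutions of the martingale problem of Definition~\ref{def:mart_problem} with $\nu = c$, $\mcb v = \gamma$, $\sigma = \sqrt{2c}$ and initial condition $\varrho_0$, and Theorem~\ref{theo:uni_mp} pins down the limit as $\bb Q_2$.
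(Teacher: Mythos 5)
Your overall architecture (tightness in $\mcb D([0,T],\mcb H_{-m})$, identification of limit points through the Dynkin martingales, and an appeal to Theorem~\ref{theo:uni_mp}) coincides with the paper's, and your drift and quadratic-variation computations are correct: the symmetrized expansion with coefficient $(a_n+b_n+c_n)/2$ in front of $\Delta f$ is an equivalent rewriting of the paper's $c_n\Delta_n f$ plus a first-order correction (cf.\ \eqref{use_pat}), and both converge to $c\Delta f-\gamma\nabla f$ because $a+b=c$. The threshold $m>5/2$ arises for the same reason in both arguments, and your identification of the boundary remainder $\mcb R_n$ agrees with the boundary terms in \eqref{martingale01}.

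The one step that would fail as written is the boundary replacement. You propose a Kipnis--Varadhan variational estimate in $L^2(\nu_\varrho)$, but that inequality (in the form of \cite[Lemma 4.3]{chang}) requires the process to be stationary, i.e.\ started from $\nu_\varrho$; in Theorem~\ref{hydrodynamic limit 3} the initial measure is an arbitrary $\mu_n$ associated to $\varrho_0$, so the law of $\eta_{sn^2}$ is not $\nu_\varrho$ and the centring of $\eta(n)-\eta(1)$ under $\nu_\varrho$ cannot be invoked directly. The paper's Lemma~\ref{replacement1} supplies the missing ingredient: the entropy bound $H(\mu_n|\nu_\varrho)\le Cn$ together with the entropy inequality and the Feynman--Kac formula reduce the non-equilibrium expectation to a variational problem involving $\mf D_n(\sqrt f;\nu_\varrho)$, which is then handled by the change of variables $\eta\mapsto\eta^{1\to n}$ (or $\eta\mapsto\eta^{1\leftrightarrow 2}$) and Young's inequality; note also that the rates entering that estimate are $b_n$ and $d_n$, not $c_n$ as your heuristic about cyclic shifts suggests. (Your $L^2(\nu_\varrho)$ estimate is exactly what the paper uses, legitimately, in the equilibrium Lemma~\ref{replacement21}.) A second, smaller omission: after computing that $\< \mcb M^{n,2}(f)\>_t$ converges, you still need to argue that the limit of the martingale sequence is a martingale with that quadratic variation; the paper does this via \cite[Theorem VIII, 3.11]{jacod} in Lemma~\ref{martingale limit}, checking in particular that the jumps of $\mcb M^{n,2}(f)$ are of order $\|\nabla f\|_\infty/n$.
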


Now that we have established the hydrodynamic limit, which is a Law of Large Numbers for the empirical measure, we can analyse the fluctuations around that limit. That is the purpose of the next section, though we consider the process starting from the invariant state $\nu_\varrho$, since the non-equilibrium scenario is much more complicated and the equilibrium case already gives interesting results. 

\subsection{Equilibrium density fluctuations}

In the last subsection we proved the hydrodynamic limit in two different time scales: in the hyperbolic time scale the limit is deterministic, but in the diffusive time scale, the limit is random. Now we analyse the 
corresponding central limit theorems starting from the stationary measure $\nu_\varrho$. To that end, fix $\varrho \in [0,1]$. We define the \textit{density fluctuation field} on functions $f\in \mcb H_{m}$, by
\begin{flalign*}
	\mcb Y_t^{n,\beta}(f)=\frac{1}{\sqrt{n}} \sum_{x \in \bb Z_n} \left( \eta_{tn^\beta}(x) - \varrho \right) f\left( \tfrac{x}{n} \right).
\end{flalign*}
Now, we denote by $\widetilde {\bb Q}_{n,\beta} $ the probability measures on the Skorohod space  $\mcb D ([0,T],\mcb H_{-m})$ induced by the probability measure $\bb P_{\nu_\varrho}$ and the fluctuation field $\{\mcb Y_t^{n,\beta}\}_{t\geq 0}$. Let also $\mcb C ([0,T],\mcb H_{-m})$  be the space of  continuous trajectories with values in $\mcb H_{-m}.$  Recall the value of $\kappa$ given in \eqref{def_kappa}.

\begin{theorem}\label{clt1}
Let $\beta=1$. Assume that $$\lim_{n\to+\infty}|a_n+b_n-c_n|=\kappa\neq 0$$ and that  we start the process from the Bernoulli product measure $\nu_\varrho$, with $\varrho \in [0,1]\,$.
For $m>3/2$, let $\widetilde{\bb Q}_{1}$ be the probability measure on 
$\mcb C ([0,T],\mcb H_{-m})$ corresponding to a stationary Gaussian process with mean $0$ and covariance
given on $f,g\in\mcb H_{m}$ by
\begin{equation}\label{cov_field}
\bb E[\mcb Y_t(f)\mcb Y_s(g)] =\varrho(1-\varrho)\langle T^\kappa_{t-s}\,f,g\rangle,
\end{equation} where $T_t f(u)=f(u+\kappa t)$ is the semigroup associated to the transport equation given in \eqref{transp_equation} with $\theta=\kappa$.
Then the sequence $\{\widetilde {\bb Q}_{n,1}\}_{n\in\bb N} $ converges weakly, as $n\to+\infty$, to    $\widetilde{\bb Q}_{1}$.
\end{theorem}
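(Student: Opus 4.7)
The plan is to follow the classical three-step scheme for equilibrium fluctuations: first, establish tightness of $\{\widetilde{\bb Q}_{n,1}\}_{n\in\bb N}$ in $\mcb D([0,T],\mcb H_{-m})$ via Mitoma's criterion, which reduces matters to tightness of the real-valued processes $\{\mcb Y^{n,1}_t(f)\}_{t\in[0,T]}$ for each $f$ in a countable dense subset of $\mcb H_m$; second, identify any limit point as a solution of a deterministic transport equation in weak form (given the initial datum); and third, invoke the Bernoulli CLT for the initial fluctuation field together with uniqueness of that equation to conclude. The main tool will be Dynkin's martingale decomposition: for each $f\in C^\infty(\bb T)$,
\begin{equation*}
\mcb M_t^{n,1}(f)\;=\;\mcb Y^{n,1}_t(f)-\mcb Y^{n,1}_0(f)-\int_0^t n\mcb L_n \mcb Y^{n,1}_s(f)\,ds
\end{equation*}
is a martingale with predictable quadratic variation $\langle \mcb M^{n,1}(f)\rangle_t=\int_0^t \Gamma_n(\mcb Y^{n,1}_s(f))\,ds$, where $\Gamma_n g:=n\mcb L_n g^2-2g\,n\mcb L_n g$.

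First I would compute $\mcb L_n \eta(x)$ directly: for $x$ in the bulk $\{3,\dots,n-2\}$ it equals $(a_n+b_n)(\eta(x+1)-\eta(x))+c_n(\eta(x-1)-\eta(x))$, with $O(1)$-many boundary corrections at $x\in\{1,2,n-1,n\}$. A discrete summation by parts then gives
\begin{equation*}
n\mcb L_n \mcb Y^{n,1}_s(f)\;=\;-(a_n+b_n-c_n)\,\mcb Y^{n,1}_s(\nabla f)+\mathscr R_n(f,\eta_{sn}),
\end{equation*}
where $\mathscr R_n$ collects the Taylor-expansion errors for $f$ and the four boundary corrections, both $o(1)$ in $L^2(\nu_\varrho)$. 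Since $a_n+b_n-c_n\to\kappa$, the drift converges to $-\kappa\,\mcb Y_s(\nabla f)$.

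Next I would analyse the quadratic variation. Each of the three shift-type moves produces a jump which, after reindexing, takes the form $\Delta\mcb Y^{n,1}(f)=n^{-1/2}\sum_y \eta(y)[f((y-1)/n)-f(y/n)]$ with each summand of order $1/n$. Decomposing $\eta(y)=\varrho+(\eta(y)-\varrho)$ and using $\sum_y [f((y-1)/n)-f(y/n)]=0$ by periodicity, independence under $\nu_\varrho$ gives $\bb E_{\nu_\varrho}[(\Delta\mcb Y^{n,1}(f))^2]=\varrho(1-\varrho)n^{-1}\sum_y[f((y-1)/n)-f(y/n)]^2=O(1/n^2)$; the transposition move affects only two positions and contributes jumps of size $O(n^{-3/2})$. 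Multiplying squared jumps by the rates and by the hyperbolic speed-up $n$, then integrating over $[0,T]$, yields $\bb E_{\nu_\varrho}[\langle \mcb M^{n,1}(f)\rangle_T]=O(1/n)\to 0$, and Doob's inequality forces $\mcb M^{n,1}(f)\to 0$ in $L^2$. Aldous' criterion then follows from the same decomposition by controlling the time-integrated drift (via stationarity and $\|\nabla f\|_\infty$) and the martingale increments via the QV estimate above.

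Any limit point $\mcb Y$ must therefore satisfy $\mcb Y_t(f)=\mcb Y_0(f)-\kappa\int_0^t \mcb Y_s(\nabla f)\,ds$ for every $f\in C^\infty(\bb T)$, whose unique weak solution given $\mcb Y_0$ is the transported field $\mcb Y_t=T^\kappa_t\mcb Y_0$. Since $\mcb Y^{n,1}_0$ is a sum of independent centred Bernoulli contributions, the standard CLT together with the Cram\'er--Wold device implies convergence of its finite-dimensional distributions to Gaussian white noise with covariance $\varrho(1-\varrho)\langle f,g\rangle$, and the covariance formula \eqref{cov_field} follows by transport. The hard part will be the quadratic variation analysis: although each shift move displaces $\Theta(n)$ cards, the jump in $\mcb Y^{n,1}(f)$ is only of order $n^{-1}$ in $L^2(\nu_\varrho)$ thanks to the telescoping produced by summation by parts and the zero-mean condition from periodicity of $f$, so that squared jumps multiplied by the $n$-speed-up still vanish\textemdash this is the mechanism that forces the limit to be a deterministic transport of the initial white noise rather than a genuine SPDE.
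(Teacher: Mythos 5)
Your proposal is correct and follows essentially the same route as the paper: Dynkin's decomposition with the drift converging (after summation by parts and Taylor expansion) to $-\kappa\,\mcb Y_s(\nabla f)$, deterministically vanishing boundary corrections, a quadratic variation of order $O(1/n)$ forcing the martingale to vanish in $L^2$, tightness reduced to the one-dimensional projections, and identification of the limit as the transport semigroup applied to the Gaussian white-noise initial field. The only cosmetic difference is that you invoke Mitoma's criterion and compute the carr\'e du champ jump-by-jump, whereas the paper uses the Hilbert-space tightness criterion with the explicit trigonometric basis $\{\psi_k\}$ and writes the quadratic variation directly in terms of $\mcb Y_s^{n,1}(\nabla_n^{\pm}f)$; these are equivalent.
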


\begin{theorem}\label{clt 3}
Let $\beta=2$. Assume that $c>0$, that $$a_n+b_n-c_n=\gamma/n$$ for some constant $\gamma \in \bb R$ and  that the system  starts  from the Bernoulli product measure $\nu_\varrho$, with $\varrho \in [0,1]\,$.  Then, the sequence $\{\widetilde {\bb Q}_{n,2}\}_{ n \in \bb N}$  converges with respect to  the weak topology of $\mcb  D ([0,T],\mcb  H_{-m})$, as $n\to+\infty$, for $m>5/2$,  to $\widetilde{\bb Q}_2$  which is concentrated on the solution of the martingale problem associated to \eqref{arica} for $\nu=c$, $\mcb v=\gamma$ and $\sigma=\sqrt {2c}$ and with initial condition  $\mcb Y_0$,  such that for each $f \in C^\infty(\bb T)  $,  $\mcb Y_0(f)$ is a normal random variable with mean zero and variance $\varrho (1-\varrho)\|f\|_{L^2(\bb T)}^2$.
\end{theorem}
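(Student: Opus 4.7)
The approach is to decompose $\mcb Y_t^{n,2}(f)$ via Dynkin's formula, establish tightness of $\{\widetilde{\bb Q}_{n,2}\}$ in $\mcb D([0,T],\mcb H_{-m})$, characterize any limit point as the solution of the martingale problem from Definition~\ref{def:mart_problem} with $\nu=c$, $\mcb v=\gamma$, $\sigma=\sqrt{2c}$, and invoke Theorem~\ref{theo:uni_mp} for uniqueness. The structure runs in parallel with the proof of Theorem~\ref{hydrodynamic limit 3} with the fluctuation field replacing the empirical measure, the key simplification being that the process starts from the invariant measure $\nu_\varrho$, so all moments of local observables are uniformly controlled in time.

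For $f\in C^\infty(\bb T)$, Dynkin's formula yields the mean-zero martingale
\[
\mcb M_t^{n}(f) \;=\; \mcb Y_t^{n,2}(f) - \mcb Y_0^{n,2}(f) - \int_0^t n^2 \mcb L_n \mcb Y_s^{n,2}(f)\, ds,
\]
with predictable quadratic variation $\int_0^t n^2 \Gamma_n\big(\mcb Y_s^{n,2}(f)\big)\, ds$, where $\Gamma_n(F)=\mcb L_n(F^2)-2F\mcb L_n F$ is the carr\'e du champ. For each of the four jump terms one computes the change in $\mcb Y^{n,2}(f)$ and Taylor-expands. The $b_n$ and $c_n$ transitions are full cyclic shifts and produce clean periodic discrete gradients; the $a_n$ transition is a cyclic shift on positions $\{1,\ldots,n-1\}$ that leaves position $n$ fixed, contributing a defect at the bond $\{n-1,n\}$; the $d_n$ transition is a single-bond swap at $\{1,2\}$. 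The bulk contributions combine as
\[
n^2 \mcb L_n \mcb Y_s^{n,2}(f) \;=\; -n(a_n+b_n-c_n)\,\mcb Y_s^{n,2}(f') + \tfrac{1}{2}(a_n+b_n+c_n)\,\mcb Y_s^{n,2}(f'') + o(1),
\]
and, using $a_n+b_n-c_n=\gamma/n$ together with $c_n\to c$ (which forces $a+b=c$), this converges to $X_s(c\Delta f - \gamma \nabla f)$. The boundary residuals are of the schematic form $\sqrt n\, a_n(\eta_s(1)-\eta_s(n))f'(0)$ and $\sqrt n\, d_n(\eta_s(2)-\eta_s(1))f'(0)$, each pointwise of order $\sqrt n$ but mean-zero under $\nu_\varrho$.

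For the quadratic variation, each cyclic-shift jump of $\mcb Y^{n,2}(f)$ equals $\mp n^{-1}\mcb Y_s^{n,2}(f')+O(n^{-3/2})$, while the $d_n$-swap contributes $O(n^{-3/2})$; consequently
\[
n^2\,\Gamma_n\big(\mcb Y_s^{n,2}(f)\big) \;=\; (a_n+b_n+c_n)\big(\mcb Y_s^{n,2}(f')\big)^2 + o(1) \;\longrightarrow\; 2c\, X_s(\nabla f)^2,
\]
giving the required quadratic variation $\int_0^t 2c\, X_s(\nabla f)^2\, ds$ with $\sigma^2=2c$. Tightness of $\{\widetilde{\bb Q}_{n,2}\}$ in $\mcb D([0,T],\mcb H_{-m})$ for $m>5/2$ follows from Mitoma's criterion: it suffices to verify tightness of each real-valued process $\{\mcb Y^{n,2}_\cdot(\psi_k)\}$, which in turn follows from Aldous' criterion via the second-moment estimates on the drift and the quadratic variation derived above. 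The exponent $m>5/2$ is dictated by the convergent series $\sum_k \gamma_k^{-m}\,k^4$, where the $k^4$ reflects the need to control the second derivative of the basis functions $\psi_k$ appearing in the bulk drift.

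The initial field $\mcb Y_0^{n,2}(f)=n^{-1/2}\sum_x(\eta(x)-\varrho)f(x/n)$ is a sum of independent mean-zero variables under $\nu_\varrho$; the Lindeberg CLT yields convergence to $\mc N\big(0,\varrho(1-\varrho)\|f\|_{L^2(\bb T)}^2\big)$, and the Cram\'er--Wold device, together with the tightness estimate $\bb E\|\mcb Y_0^{n,2}\|_{-m}^2\le C\sum_k\gamma_k^{-m}<\infty$, identifies $\mcb Y_0$ in $\mcb H_{-m}$ as the claimed Gaussian white noise. Combining tightness, the martingale characterization of every limit point with parameters $(\nu,\mcb v,\sigma)=(c,\gamma,\sqrt{2c})$ and initial condition $\mcb Y_0$, and the uniqueness Theorem~\ref{theo:uni_mp}, yields $\widetilde{\bb Q}_{n,2}\Rightarrow \widetilde{\bb Q}_2$. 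The main obstacle is the control of the $O(\sqrt n)$ boundary contributions from the $a_n$- and $d_n$-transitions: these are mean-zero but pointwise divergent, and to show their vanishing in $L^2$ after integration against the diffusive time scale one needs a Kipnis--Varadhan-type estimate exploiting the fact that the cyclic shifts at rates $b_n,c_n$ of order one relax local defects at bonds $\{1,2\}$ and $\{n-1,n\}$ on time scales of order $n^{-2}$ in the rescaled time; the invariance of $\nu_\varrho$ is essential here, providing translation-invariance of equal-time covariances and a tractable spectral framework for the bound.
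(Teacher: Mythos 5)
Your proposal is correct and follows essentially the same route as the paper: a Dynkin decomposition of the fluctuation field, tightness in $\mcb H_{-m}$ for $m>5/2$ via coordinate-wise second-moment and Aldous-type estimates, a Kipnis--Varadhan bound (the paper's Lemma~\ref{replacement21}) to kill the $O(\sqrt n)$ boundary terms, convergence of the quadratic variation to $2c\,X_s(\nabla f)^2$, the i.i.d. CLT for the initial field under $\nu_\varrho$, and uniqueness from Theorem~\ref{theo:uni_mp}. Your drift and carr\'e du champ computations agree with the paper's \eqref{martingalef} and \eqref{qv2_flu} after regrouping, so no substantive difference remains.
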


\section{Hydrodynamic limits}\label{s3}

In this section we prove Theorems \ref{hydrodynamic limit 1} and \ref{hydrodynamic limit 3}. Recall the definition of  ${\bb Q}_{n,\beta}$ given above Definition \ref{associated profile}. We first show that $\{{\bb Q}_{n,\beta}\} _{ n \in \bb N}$ is tight with respect to the weak topology of $\mcb D([0,T],\mcb H_{-m})$ for any $m>3/2$ when $\beta=1$ and $m>5/2$ when $\beta=2$. Then we characterize the limit points. 

\subsection{Tightness}

In order to prove tightness, we follow~\cite{HS} and we invoke the following criterion.

\begin{lemma}\cite[Chapter 11, Lemma 3.2]{claudios}\label{criterion}

	The sequence of probability measures $\{\bb Q_{n,\beta}\}_{ n \in \bb N}$ on $\mcb D ([0,T], \mcb H_{-m})$ is tight if
	\begin{enumerate}
		\item 
		$\lim_{A\to \infty} \limsup_{n \to \infty} {\bb Q}_{n,\beta} \left(  \sup_{t \in [0,T]} 
		\|\pi_t^{n,\beta}\|_{-m}>A \right)=0$;	\item $\lim_{\delta \to 0} \limsup_{n \to \infty} {\bb Q}_{n,\beta}\left( \sup_{\stackrel{|s-t|\leq \delta}{0\leq s, t \leq T}} \|\pi_t^{n,\beta} - \pi_s^{n,\beta}\|_{-m} \geq \varepsilon \right)=0$ for every $\varepsilon>0$.
	\end{enumerate}
\end{lemma}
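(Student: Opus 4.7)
The plan is to defer the bulk of the work to the cited reference \cite{claudios} and simply recall why the two listed conditions suffice; this is the standard Hilbert-space tightness scheme in the Kipnis-Landim framework. The main ingredients I would use are (i) an infinite-dimensional Aldous-Kurtz criterion for tightness in $\mcb D([0,T],E)$ with $E$ a separable Hilbert space, and (ii) the compactness of the Sobolev embedding $\mcb H_{-m'}\hookrightarrow \mcb H_{-m}$ for $m'<m$, which on $\bb T$ follows immediately from the Fourier characterization \eqref{eq:norm-} together with $\gamma_k=1+4\pi^2k^2\to\infty$.

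First I would reduce tightness in $\mcb D([0,T],\mcb H_{-m})$ to tightness of the real-valued coordinate processes $\{\langle \pi_t^{n,\beta},\psi_k\rangle\}_{t\in[0,T]}$ for each $k\in\bb Z$ via Mitoma's theorem (applicable because the Sobolev scale $\{\mcb H_{-j}\}_j$ gives $\mcb H_{-m}$ a countable Hilbert nuclear structure). For each fixed $k$, Aldous's criterion then applies: condition (1) evaluated against $\psi_k\in\mcb H_m$ furnishes uniform boundedness of $\langle \pi_t^{n,\beta},\psi_k\rangle$ since $|\langle\pi_t^{n,\beta},\psi_k\rangle|\le \|\pi_t^{n,\beta}\|_{-m}\,\|\psi_k\|_m$, and condition (2) gives uniform control of the modulus of continuity, so each coordinate process is tight in $\mcb D([0,T],\bb R)$.

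Next I would reassemble the global tightness statement in $\mcb H_{-m}$ from coordinate tightness. The point is that condition (1) is a uniform $\mcb H_{-m}$ bound, and hence, by the compact embedding $\mcb H_{-m+\varepsilon}\hookrightarrow \mcb H_{-m}$, sets of the form $\{\xi\in\mcb H_{-m+\varepsilon}\colon \|\xi\|_{-m+\varepsilon}\le A\}$ play the role of compact sets in $\mcb H_{-m}$; the empirical measure automatically has $\|\pi_t^{n,\beta}\|_{-m+\varepsilon}$ controlled since $\sum_k \gamma_k^{-m+\varepsilon}<\infty$ already for $m$ in the ranges stipulated ($m>3/2$ and $m>5/2$). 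Combined with the modulus-of-continuity bound from (2), Arzelà-Ascoli in the Skorokhod topology then yields tightness in $\mcb D([0,T],\mcb H_{-m})$.

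The subtle point, and the main obstacle in a genuinely self-contained proof, is that boundedness in $\mcb H_{-m}$ alone does not produce relative compactness in $\mcb H_{-m}$, so one must carefully pass between Sobolev indices through the compact embedding above. This is precisely the juggling carried out in \cite[Chapter~11]{claudios}, which is the argument I would cite rather than reproduce, since the present paper uses the criterion only as a black box to reduce the tightness of $\{\bb Q_{n,\beta}\}$ to verifying (1) and (2) for our specific empirical-measure process.
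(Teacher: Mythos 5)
The paper itself gives no proof of this lemma: it is quoted verbatim from Kipnis--Landim and used purely as a black box, so your decision to defer to \cite[Chapter 11]{claudios} is exactly what the authors do. Your accompanying sketch (coordinate tightness via an Aldous-type criterion, plus compact containment obtained by passing through a compact Sobolev embedding, with the required uniform bound in a stronger norm $\mcb H_{-m'}$, $1/2<m'<m$, being automatic for the empirical measure because its total mass is at most one) is a faithful account of the standard argument behind the cited lemma, so the two approaches coincide.
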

In order to prove the validity of (1) and (2) above,  we use the martingale characterization of the empirical measure. We can recast (1) and (2) in terms of $\mathbb P_{\mu_n}$ instead  of  ${\bb Q}_{n,\beta} $. 
From  Dynkin's formula (see for instance~\cite[Appendix 1, Lemma 5.1]{claudios}), for  any $f \in C^{\infty}( \bb T)$,
\begin{flalign}\label{eq:martingale}
	&\mcb M_t^{n,\beta}(f):=\langle \pi_{t}^{n,\beta},f\rangle -\langle \pi_0^{n,\beta},f\rangle - \int_0^t ( \partial_s + n^{\beta} \mcb L_n ) \langle \pi_{s}^{n,\beta}, f\rangle ds
\\\label{eq:martingale_1}
	&\mcb N_t^{n,\beta}(f):=(\mcb M_t^{n,\beta}(f))^2 - \int_0^t n^\beta \mcb L_n (\langle \pi_{s}^{n,\beta}, f\rangle)^2 -2 \langle \pi_{s}^{n,\beta}, f\rangle\mcb L_n \langle \pi_{s}^{n,\beta}, f\rangle ds
\end{flalign}
are mean-zero martingales with respect to the natural filtration of the process $\{\eta_t\}_{t\geq 0}$. 
A simple computation shows that
\begin{equation}
\begin{split}
&\mcb L_n\eta(1)=(a_n+b_n+d_n)(\eta(2)-\eta(1))+c_n(\eta(n)-\eta(1))
\\& 
\mcb L_n\eta(2)=(a_n+b_n)(\eta(3)-\eta(2))+(c_n+d_n)(\eta(1)-\eta(2))\\& 
\mcb L_n\eta(n-1)=a_n(\eta(1)-\eta(n-1))++b_n(\eta(n)-\eta(n-1))+c_n(\eta(n-2)-\eta(n-1))\\&
\mcb L_n\eta(n)=b_n(\eta(1)-\eta(n))+c_n(\eta(n-1)-\eta(n))
\end{split}
\end{equation} and $\mcb L_n\eta(x)=(a_n+b_n)(\eta(x+1)-\eta(x))+c_n(\eta(x-1)-\eta(x))$ for all $x\in\{3,4,\cdots, n-2\}$.
After long, but elementary computations, we get,
\begin{equation}\label{martingale01}
\begin{split}
	n^{\beta} \mcb L_n \langle \pi_{s}^{n,\beta},f\rangle&=- (a_n+b_n)n^{\beta-2} \sum_{x \in \bb Z_n} \eta_{s n^{\beta}}(x) \nabla _n^- f(\tfrac{x}{n} )+ c_n n^{\beta-2} \sum_{x \in \bb Z_n} \eta_{s n^{\beta}}(x) \nabla_n^+ f(\tfrac{x}{n})\\
	&-a_n n^{\beta-2} \left( \eta_{s n^{\beta}}(1)-\eta_{s n^{\beta}}(n) \right) \nabla_n^- f\left(0 \right) + d_n  n^{\beta-2} \left( \eta_{s n^{\beta}}(1)-\eta_{s n^{\beta}}(2) \right) \nabla_n^+ f(\tfrac{1}{n} ).
\end{split}
\end{equation}
Above,  $\nabla_n^-$, $\nabla_n^+$  are  discrete operators defined on  $f: \bb T \to \bb R$ and  $x \in \bb Z_n$ by
$$
\nabla_n^-f(\tfrac{x}{n} )=n( f(\tfrac{x}{n}) -f(\tfrac{x-1}{n})) \quad \textrm{and} \quad \nabla_n^+f(\tfrac{x}{n})=\nabla_n^-f(\tfrac{x+1}{n}).$$

The quadratic variation of the martingale given in \eqref{eq:martingale} is equal to 
\begin{flalign}\label{qv01}
	\left\<  \mcb M^{n,\beta}(f) \right\>_t&= \int_0^t (a_n+b_n) n^{\beta-2} \left( \pi_{sn^{\beta}}^n(\nabla_n^{-} f) \right)^2 ds + \int_0^t c_n n^{\beta-2} \left( \pi_{sn^{\beta}}^n(\nabla_n^{+} f) \right)^2 ds\\
	&+\int_0^t 2 a_n n^{\beta-3} (\eta_{sn^{\beta}}(1)-\eta_{sn^{\beta}}(n)) \nabla_n^- f(0) \pi_{sn^{\beta}}^n(\nabla_n^{-} f)\, ds \nonumber\\
	& +\int_0^t a_n n^{\beta-4} (\eta_{sn^{\beta}}(1)-\eta_{sn^{\beta}}(n))^2 \left( \nabla_n^- f(0) \right)^2 ds \nonumber\\& + \int_0^t d_n n^{\beta-4} (\eta_{sn^{\beta}}(1)-\eta_{sn^{\beta}}(2))^2 ( \nabla_n^+ f(\tfrac{1}{n}))^2 ds. \nonumber
\end{flalign}
Now we prove the validity of (1). Recall  \eqref{psi} and \eqref{eq:norm-}. From Markov's inequality,
\begin{flalign}
 \bb P_{\mu_n} \Big(  \sup_{t \in [0,T]} \|\pi_{t}^{n,\beta}\|_{-m}>A \Big) \leq \frac{1}{A^2}    \sum_{k \in \bb Z} \gamma_k^{-m} \bb E_{\mu_n} \left[ \sup_{t \in [0,T]}\Big|\left\< \pi_{t}^{n,\beta}, \psi_k \right\>\Big|^2 \right]. \label{a0}
\end{flalign}
To estimate last identity we analyse (\ref{eq:martingale}) for each time scale. Note that  for $\beta=1$ and from a Taylor's expansion, we get
	\begin{flalign}\label{expre}
		\langle \pi_{t}^{n,1},\psi_k\rangle &=\mcb  M_t^{n,1}(\psi_k)+\langle \pi_0^{n,1},\psi_k\rangle   - (a_n+b_n-c_n)  \int_0^t  \langle \pi_{s }^{n,1},\nabla \psi_k\rangle ds +  \mcb O (n^{-1}),
	\end{flalign}
	while for $\beta=2$, we get
\begin{flalign}\label{expre2}
	\langle \pi_{t}^{n,2},\psi_k\rangle &=\mcb  M_t^{n,2}(\psi_k)+\langle \pi_0^{n,2},\psi_k\rangle   + c_n   \int_0^t \langle \pi_{s}^{n,2},\Delta_n \psi_k\rangle ds +\left(c_n- (a_n+b_n)\right)\int_0^t \langle \pi_{s}^{n,2},\nabla_n^- \psi_k\rangle ds\\
	& -a_n  \int_0^t ( \eta_{s n^{2}}(1)-\eta_{s n^{2}}(n) ) \nabla_n^- \psi_k(0) ds + d_n  \int_0^t ( \eta_{s n^{2}}(1)-\eta_{s n^{2}}(2)) \nabla_n^+ \psi_k(\tfrac{1}{n} ) ds. \nonumber
\end{flalign}
	Above  the discrete Laplacian operator is defined on $f: \bb T \to \bb R$ and  $x \in \bb Z_n$ by $$\Delta_nf(\tfrac{x}{n} )=n^2( f(\tfrac{x+1}{n}) +f(\tfrac{x-1}{n})-2f(\tfrac{x}{n})).$$
	By the convexity inequality $|\sum_{k=1}^\ell x_k|^2 \leq \ell \sum_{k=1}^\ell |x_k|^2$ for $\ell \in \bb N$, we  are left to estimate the contribution of each term in the previous identities \eqref{expre} and \eqref{expre2}. We note that the initial term can be estimated by the Cauchy-Schwarz inequality, as
	\begin{flalign}\label{a21}
			\bb E_{\mu_n}\Bigg[  |\langle \pi_0^{n,\beta},\psi_k\rangle |^2  \Bigg]\leq \frac{1}{n}\sum_{x \in \bb Z_n} |\psi_k(\tfrac x n)|^2.
	\end{flalign}
	Now we split the proof and we start with the hyperbolic case. 
	
From Doob's inequality, \eqref{qv01} for $\beta=1$ and noting that $\eta(x)\leq 1$ for all $x\in\mathbb Z_n$, we get
	\begin{flalign}
		\bb E_{\mu_n}&\left[ \sup_{t \in [0,T]} |\mcb M_t^{n,1}(\psi_k)|^2  \right] \leq 4   \bb E_{\mu_n}\Bigg[  |\mcb M_T^{n,1}(\psi_k)|^2  \Bigg] \lesssim n^{-1}. \label{a2}
	\end{flalign}
	Now, it remains to estimate the third term on the right hand-side of \eqref{expre}.
	By the definition of $\psi_k$ and using the Cauchy-Schwarz inequality twice, we get
	\begin{flalign}
		\bb E_{\mu_n}\Bigg[ \sup_{t \in [0,T]} \Big|\int_0^t  \langle \pi_{s }^{n,1},\nabla \psi_k \rangle ds\Big|^2  \Bigg] \leq T \, \bb E_{\mu_n}\Bigg[  \int_0^T\Big| \langle \pi_{s }^{n,1},\nabla \psi_k\rangle\Big|^2 ds  \Bigg] \leq  \frac{T^2}{n}\sum_{x \in \bb Z_n} |\nabla \psi_k(\tfrac xn)|^2. \label{a3}
	\end{flalign}
	From the previous results and since $\frac{1}{n}\sum_{x \in \bb Z_n} |f(\tfrac xn)|^2$ converges to $\left\< f,f \right\>$ for any $f \in C^\infty (\bb T)$, we have
\begin{flalign*}
\limsup_{n \to \infty} \bb P_{\mu_n} \left(  \sup_{t \in [0,T]} \|\pi_{t}^{n,1}\|_{-m}>A \right)
& \lesssim{ (T^2+1)}\,{A^2}    \sum_{k \in \bb Z} \gamma_k^{-m} \Big\{  \left\< \psi_k,\psi_k \right\>+ \left\< \nabla \psi_k, \nabla \psi_k \right\>\Big\}\\
&\lesssim{ (T^2+1) }\,{A^2}\Big(1+2 \sum_{k=1}^\infty \Big\{  1+4 \pi^2 k^2 \Big\}^{1-m}\Big).
\end{flalign*}
Therefore, (1) follows from the fact that the series above converges for $m>3/2$.\\

Now we focus on the case $\beta=2$. 
From Doob's inequality, \eqref{qv01} for $\beta=2$, noting that $\eta(x)\leq 1$ for all $x\in\mathbb Z_n$, and by  the Cauchy-Schwarz inequality, we get
\begin{flalign}
	\bb E_{\mu_n}\Bigg[ \sup_{t \in [0,T]} |\mcb M_t^{n,2}(\psi_k)|^2  \Bigg]& \leq 4   \bb E_{\mu_n}\Bigg[  |\mcb M_T^{n,2}(\psi_k)|^2  \Bigg] \nonumber \\& = 4   \int_0^T (a_n+b_n) \bb E_{\mu_n}\Bigg[ \langle \pi_{s}^{n,2},\nabla_n^{-} \psi_k\rangle^2\Bigg]  ds +4 \int_0^t c_n \bb E_{\mu_n}\Bigg[  \langle \pi_{s}^{n,2},\nabla_n^{+} \psi_k\rangle^2\Bigg] ds + \mcb O (n^{-1}) \nonumber \\
	&\lesssim\frac{ (a_n+b_n)T}{n} \sum_{x \in \bb Z_n}  |\nabla_n^{-} \psi_k (\tfrac xn)|^2 + \frac{ c_n T}{n} \sum_{x \in \bb Z_n}  |\nabla_n^{+} \psi_k (\tfrac xn)|^2. \label{b2}\end{flalign}
With the same arguments as used in  \eqref{a3} we see that 
\begin{flalign}
	\bb E_{\mu_n} \Bigg[ \sup_{t \in [0,T]}  \Big|\int_0^t \langle  \pi_{s}^{n,2},\Delta_n \psi_k\rangle ds \Big|^2   \Bigg] \leq   \frac{T^2}{n}\sum_{x \in \bb Z_n} |\Delta_n \psi_k(\tfrac xn)|^2,\label{b3}\\
	\bb E_{\mu_n} \Bigg[ \sup_{t \in [0,T]}  \Big|\int_0^t  \langle \pi_{s}^{n,2},\nabla_n^- \psi_k\rangle ds \Big|^2   \Bigg] \leq   \frac{T^2}{n}\sum_{x \in \bb Z_n} |\nabla_n^- \psi_k(\tfrac xn)|^2. \label{b4}
\end{flalign}
Finally, by the Cauchy-Schwarz inequality we have
\begin{flalign}
\bb E_{\mu_n} \Bigg[ \sup_{t \in [0,T]}  \Big|a_n  \int_0^t \left( \eta_{s n^{2}}(1)-\eta_{s n^{2}}(n) \right) \nabla_n^- \psi_k\left(0 \right) ds  \Big|^2   \Bigg] \leq |a_n|^2 T |\nabla_n^- \psi_k(0)|^2\label{b5},\\
\label{b6}
\bb E_{\mu_n} \Bigg[ \sup_{t \in [0,T]}  \Big|d_n  \int_0^t \left( \eta_{s n^{2}}(1)-\eta_{s n^{2}}(2) \right) \nabla_n^+ \psi_k\left(\tfrac{1}{n} \right) ds  \Big|^2   \Bigg] \leq |d_n|^2 T \left| \nabla_n^+ \psi_k(\tfrac{1}{n}) \right|^2.
\end{flalign}
By (\ref{a21}), (\ref{b2}), (\ref{b3}), (\ref{b4}), (\ref{b5}) and (\ref{b6}), we get
\begin{flalign*}
	\limsup_{n \to \infty}& \,\,\bb P_{\mu_n} \Bigg(  \sup_{t \in [0,T]} \|\pi_{tn^2}^n\|_{-m}>A \Bigg) \leq \frac{1}{A^2}    \sum_{k \in \bb Z} \gamma_k^{-m} \limsup_{n \to \infty} \bb E_{\mu_n} \Bigg[ \sup_{t \in [0,T]}|\pi_{tn^2}^n(\psi_k)|^2  \Bigg]\\
	&\lesssim \frac{1}{A^2}    \sum_{k \in \bb Z} \gamma_k^{-m} \Big\{  \left\< \psi_k,\psi_k \right\>+ \left\< \nabla \psi_k, \nabla \psi_k \right\>+ \left\< \Delta \psi_k, \Delta \psi_k \right\>\Big\} + \frac{(a^2+d^2)T}{A^2}    \sum_{k \in \bb Z} \gamma_k^{-m} |\nabla \psi_k (0)|^2\\
	&\lesssim\frac{1 }{A^2}+ \frac{1}{A^2}    \sum_{k=1}^\infty \Big\{  1+4 \pi^2 k^2+ (2\pi k)^4 \Big\} \left\{  1+ (2 \pi k)^2 \right\}^{-m} + \frac{ \pi^2 (a^2+d^2)T}{A^2}    \sum_{k =1}^\infty k^2\left\{  1+  \pi^2 k^2 \right\}^{-m}.
\end{flalign*}
Observe that the series above converge if $m>5/2$. This concludes the proof of (1).

Now we prove (2). Observe that, arguing similarly, one can show that if $m>3/2$ for $\beta=1$ or $m>5/2$ for $\beta=2$, then
\begin{flalign*}
	\lim_{N \to \infty} \limsup_{n \to\infty} \mathbb E_{\mu_n}  \Bigg[  \sup_{t \in [0,T]} \,\,\sum_{|k| \geq N} | \langle \pi_{t}^{n,\beta},\psi_k\rangle|^2 \gamma_k^{-m}   \Bigg]=0.
\end{flalign*}
Thus, (2) follows if for any integer $N$ and every $\varepsilon>0$,
$$
\lim_{\delta \to 0} \limsup_{n \to \infty} \bb P_{\mu_n}  \Bigg[   \sup_{\substack{|s-t|\leq \delta\\0\leq s, t \leq T}} \,\,\sum_{|k| \leq N} |\langle \pi_{t}^{n,\beta},\psi_k\rangle-\langle \pi_{s}^{n,\beta},\psi_k\rangle|^2 \gamma_k^{-m}\geq \varepsilon   \Bigg]=0.
$$
Observe also that to prove the above limit it is sufficient to show that
\begin{equation}\label{end_tight}
\lim_{\delta \to 0} \limsup_{n \to \infty} \bb P_{\mu_n}  \Bigg[   \sup_{\stackrel{|s-t|\leq \delta}{0\leq s, t \leq T}}| \langle \pi_{t}^{n,\beta},\psi_k\rangle-\langle \pi_{s}^{n,\beta},\psi_k\rangle|^2 \geq \varepsilon    \Bigg]=0
\end{equation}
for every $k \in \bb Z$. 	Now we split the proof and we start with the hyperbolic regime. 
Recall \eqref{qv01} for $\beta=1$. Then  the previous limit is a consequence of the next two results
 \begin{equation*}
 \begin{split}
		&(A)\quad \quad  \lim_{\delta \to 0} \limsup_{n \to \infty}  \bb P_{\mu_n} \Bigg[  \sup_{\substack{|s-t|\leq \delta\\0\leq s, t \leq T}} \left|\int_s^t \langle \pi_{r}^{n,1} ,\nabla \psi_k\rangle dr\right| > \varepsilon  \Bigg]=0.\\
&	(B)\quad \quad  \lim_{\delta \to 0} \limsup_{n \to \infty}  \bb P_{\mu_n} \Bigg[  \sup_{\substack{|s-t|\leq \delta\\0\leq s, t \leq T}} |\mcb M_t^{n,1}(\psi_k)-\mcb M_s^{n,1}(\psi_k)| > \varepsilon  \Bigg]=0.\end{split}
\end{equation*}
We start by proving (A). Note that from  Markov and Cauchy-Schwarz inequalities, and since $|t-s| \leq \delta$ and $s,t \leq T$, we get 
\begin{flalign*}
\bb P_{\mu_n} \Bigg[  \sup_{\substack{|s-t|\leq \delta\\0\leq s, t \leq T}} \Big|\int_s^t \langle \pi_{r}^{n,1} ,\nabla \psi_k\rangle dr\Bigg| > \varepsilon  \Big] 
 \leq  \frac{\delta T}{ \varepsilon^2} \bb E_{\mu_n} \Bigg[  \int_0^T |\langle\pi_{r}^{n,1} ,\nabla \psi_k\rangle|^2   dr \Bigg]\leq  \frac{\delta T^2}{ \varepsilon^2 n} \sum_{x \in \bb Z_n} |\nabla \psi_k (\tfrac xn)|^2.
\end{flalign*}

Now we prove (B). 
For $\beta>0$, define
\begin{flalign}\label{omega}
\omega_{\delta}(\mcb M^{n,\beta}(f))=\inf_{t_i} \max_{i \in [0,r)} \sup_{t_i \leq s < t <t_{i+1}} |\mcb M_t^{n,\beta}(f)-\mcb M_s^{n,\beta}(f)|,
\end{flalign}
where the first infimum is taken over all partitions of $[0,T]$ such that
$
0=t_0 < t_1 < \cdots <t_r=T$ and $
t_{i+1}-t_i>\delta$ for $0 \leq i <r.$
Since $$\sup_{t\in[0,T]} \Big|\mcb M_t^{n,\beta}(f)-\mcb  M_{t-}^{n,\beta}(f)\Big|=\sup_{t\in[0,T]} \Big|\langle \pi_{t}^{n,\beta},\psi_k\rangle-\langle \pi_{t^-}^{n,\beta},\psi_k\rangle\Big|\leq \frac{2}{n} \|\nabla \psi_k \|_{\infty}$$ and
$$
\sup_{\substack{|s-t|\leq \delta\\0\leq s, t \leq T}} \|\mcb M_t^{n,\beta}(\psi_k)-\mcb M_s^{n,\beta}(\psi_k)\|_{-m}  \leq 2 \omega_{\delta}(\mcb M^{n,\beta}(\psi_k)) + \sup_{t\in[0,T]} \Big|\mcb M_t^{n,\beta}(\psi_k)-\mcb M_{t-}^{n,1}(\psi_k)\Big|,
$$
the proof ends (for $\beta=1$) if we show that
$$
\lim_{\delta \to 0} \limsup_{n \to \infty} \bb P_{\mu_n} \Bigg[  \omega_{\delta}(\mcb M^{n,1}(\psi_k)) > \varepsilon \Bigg]=0
$$
for every $\varepsilon>0$. By \cite[Chapter 4, Proposition 1.6]{claudios}, it is enough to check that for every $\varepsilon>0$
$$
\lim_{\delta \to 0} \limsup_{n \to \infty} \sup_{\stackrel{\tau \in \mathcal T_T}{\theta \in [0,\delta]}} \bb P_{\mu_n} \Bigg[ \Big|\mcb M_{(\tau+\theta)\wedge T}^{n,1}(\psi_k)-\mcb M_{\tau}^{n,1}(\psi_k)\Big| >\varepsilon    \Bigg]=0,
$$
where $\mathcal T_T$ stands for all stopping times bounded from above by $T$. From Markov's inequality, since $\mcb M_t^{n,1}(\psi_k)$ is a martingale and since $\tau$ is a bounded stopping time,
\begin{flalign*}
\bb P_{\mu_n} \Bigg[ \Big|\mcb M_{\tau+\theta}^{n,1}(f)-\mcb M_{\tau}^{n,1}f)\Big| >\varepsilon    \Bigg] &\leq \frac{1}{\varepsilon^2} \bb E_{\mu_n} \Bigg[ \left\<\mcb M^{n,1}(f)\right\>_{\tau+\theta} -\left\<\mcb M^{n,1}(f)\right\>_{\tau}    \Bigg]\lesssim Cn^{-1}.
\end{flalign*}
And this ends the proof for the case $\beta=1$. For $\beta=2$, recall \eqref{qv01}.  
Then \eqref{end_tight} 
is a consequence of (A) (which also holds for $\beta=2$) and to prove (B), it is enough to note that 
	\begin{flalign*}
		\bb P_{\mu_n} &\Bigg[ \Big|\mcb M_{\tau+\theta}^{n,2}(f)-\mcb M_{\tau}^{n,2}(f)\Big| >\varepsilon    \Bigg] \\&\leq \frac{1}{\varepsilon^2} \bb E_{\mu_n}\Bigg[ \int_\tau^{\tau+\theta} \!\!\!\!(a_n+b_n)  (\langle \pi_{s}^{n,2},\nabla_n^{-} \psi_k \rangle )^2 ds + \int_\tau^{\tau+\theta} c_n  (\langle \pi_{s}^{n,2},\nabla_n^{+} f\rangle)^2 ds  \Bigg] + \mcb O (n^{-1})\\
		&\leq \frac{\delta}{\varepsilon^2}\Big[(a_n+b_n)\|\nabla_n^- f\|_{\infty}^2 + c_n\|\nabla_n^+ f\|_{\infty}^2 \Big] + \mcb O (n^{-1})
	\end{flalign*}
	and last display vanishes by taking $n\to+\infty$ and then $\delta\to 0. 
$
This ends the proof of Lemma \ref{criterion} for $\beta\in\{1,2\}$.

From last results, we  know that $\{\bb Q_{n,\beta}\}_{ n \in \bb N }$ is tight in the weak topology of $\mcb D([0,T],\mcb H_{-m})$ for any $m>3/2$ when $\beta=1$ and $m>5/2$ for $\beta=2$.  Now we characterize the limit point.

\subsection{Characterization of limit points}

Now we complete the proof of Theorems~\ref{hydrodynamic limit 1} and \ref{hydrodynamic limit 3}.
From the computations of last subsection, we know that there exists a limiting point  $\mathbb Q_{\beta}$ of  the sequence $\{\bb Q_{n,\beta}\}_{ n \in \bb N }$. Since in both cases of $\beta$ we have $\eta(x)\leq 1$, then  $\mathbb Q_{\beta}$  is supported on trajectories of measures that are absolutely continuous with respect to the Lebesgue measure: $$\bb Q_\beta(\pi_\cdot: \forall t\in[0,T], \,\pi_t(du)=\varrho(t,u)du)=1.$$
Now we characterize the density. 
We start with the hyperbolic case and we present an heuristic argument, from which one easily deduces the rigorous one. 
Recall \eqref{martingale01} with $\beta=1$. 
From the previous section we know that the martingale \eqref{eq:martingale} vanishes in $\mathbb L^2(\mathbb P_{\mu_n})$, as $n\to+\infty$.   Since the limit $\pi^1_t(du)$ satisfies $\pi^1_t(du)=\varrho^\kappa(t,u)du$ and $\mu_n $ is associated to $\varrho_0$,  we get
$$
	0=\langle \varrho^\kappa_t,f\rangle -\langle \varrho_0,f\rangle + \int_0^t  (a+b-c)\langle \varrho_s^\kappa,\nabla  f\rangle ds
$$
which matches with the notion of weak solution given in  Definition \ref{def_transport} with $\mathfrak g=\varrho_0$ and  $\kappa=a+b-c$. 
Now we go to the diffusive case. Recall \eqref{martingale01} with $\beta=2$. By reorganizing terms and using the fact that $a_n+b_n-c_n=\tfrac{\gamma}{n}$, we can rewrite
\begin{flalign}\label{use_pat}
	\mcb M_t^{n,2}(f)&=\langle \pi_{t}^{n,2},f \rangle-\langle \pi_{0}^{n,2}, f\rangle- c_n   \int_0^t \langle \pi_{s}^{n,2},\Delta_n f\rangle ds +\gamma \int_0^t \langle\pi_{s}^{n,2},\nabla_n^- f\rangle ds\\
	& +a_n  \int_0^t ( \eta_{s n^{2}}(1)-\eta_{s n^{2}}(n) ) \nabla_n^- f(0) ds - d_n  \int_0^t ( \eta_{s n^{2}}(1)-\eta_{s n^{2}}(2) ) \nabla_n^+ f(\tfrac{1}{n} ) ds. \nonumber
\end{flalign}
 In this case, the argument is similar to one used in  $\beta=1$, but now the martingale  $\mcb M_t^{n,2}(f)$ does not vanish, and showing that the terms in the last line of last display vanish is more complicated since we need to use the dynamics of the process. To overcome this problem we use the two lemmas below, which allows to conclude that in the limit, as $n\to+\infty$, we obtain
\begin{equation*}
		\mcb M^2_t(f) := \langle \pi^2_t,f\rangle-\langle\pi^2_0, f\rangle -c\int_0^t \langle \pi^2_s,\Delta f\rangle ds +\gamma\int_0^t \langle \pi^2_s,\nabla f\rangle ds.
		\end{equation*}
The next lemma is necessary to control the boundary terms in \eqref{use_pat}. 
\begin{lemma}\label{replacement1}
	Let $r\in\{2,n\}$ and assume that if $r=2$ then $d_n>0$ or if $r=n$ then $b_n>0$.  For any $t >0$ and $\delta\in(0,1)$
	\begin{flalign*}
		\bb E_{\mu_n}\Bigg[ \Big| \int_0^t (\eta_{sn^2}(1)-\eta_{sn^2}(r)) ds \Big| \Bigg] \lesssim t\max\Big\{\frac{1}{n^{\delta}},\frac{1} {\mathfrak C_r n^{1-\delta}}\Big\},
	\end{flalign*}
	where $~\mathfrak C_r=d_n\textbf{1}_{r=2}+b_n \textbf{1}_{r=n}$.
\end{lemma}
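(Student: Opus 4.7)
The plan is to apply Dynkin's formula at the speeded-up time scale $n^{2}$ to the single-site observable $f_r(\eta)=\eta(r)$, exploiting that the coefficient of $(\eta(1)-\eta(r))$ in $\mcb L_n\eta(r)$ is at least $\mathfrak{C}_r$. For $r=2$ the identity $\mcb L_n \eta(2)=(a_n+b_n)(\eta(3)-\eta(2))+(c_n+d_n)(\eta(1)-\eta(2))$, and for $r=n$ the identity $\mcb L_n \eta(n)=b_n(\eta(1)-\eta(n))+c_n(\eta(n-1)-\eta(n))$, allow rewriting Dynkin's identity as
\[
\mathfrak{C}_r\int_0^t(\eta_{sn^2}(1)-\eta_{sn^2}(r))\,ds = \tfrac{1}{n^{2}}\bigl(\eta_{tn^{2}}(r)-\eta_0(r)-\mcb M_t^{(r)}\bigr)-\lambda_n\int_0^t(\eta_{sn^{2}}(r_*)-\eta_{sn^{2}}(r))\,ds,
\]
where $r_*\in\{3,n-1\}$ is the bulk-facing neighbour and $\lambda_n\in\{a_n+b_n,c_n\}$ is bounded. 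A standard computation gives the quadratic variation $\langle \mcb M^{(r)}\rangle_t\lesssim n^{2} t$, so $\bb E_{\mu_n}|\mcb M_t^{(r)}|\lesssim n\sqrt{t}$, contributing a term of order $\sqrt{t}/(\mathfrak{C}_r n)$ to the final estimate.

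Next, the residual nuisance $\int_0^t(\eta(r_*)-\eta(r))\,ds$ is handled by iterating the same Dynkin trick through the bulk. For each $x\in\{3,\ldots,n-2\}$, the bulk identity $\mcb L_n\eta(x)=(a_n+b_n)(\eta(x+1)-\eta(x))+c_n(\eta(x-1)-\eta(x))$ gives
\[
\bb E_{\mu_n}\Bigl|\int_0^t(\eta_{sn^{2}}(x)-\eta_{sn^{2}}(x-1))\,ds\Bigr| \lesssim \frac{1+n\sqrt{t}}{c_n n^{2}} + q\,\bb E_{\mu_n}\Bigl|\int_0^t(\eta_{sn^{2}}(x+1)-\eta_{sn^{2}}(x))\,ds\Bigr|,
\]
with $q=(a_n+b_n)/c_n=1+O(1/n)$ under the standing hypothesis $a_n+b_n-c_n=\gamma/n$ together with $c>0$. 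Iterating $K=\lfloor n^{1-\delta}\rfloor$ steps produces a geometric product $q^K\le e^{\gamma K/(nc)}=O(1)$, a cumulative martingale error of order $K\sqrt{t}/(c_n n)\asymp \sqrt{t}\,n^{-\delta}$, and a leftover far-end difference bounded trivially by $t$ and multiplied by $q^K=O(1)$.

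Assembling the contributions and dividing by $\mathfrak{C}_r$ yields a bound of order $\sqrt{t}/(\mathfrak{C}_r n)+\sqrt{t}\,n^{-\delta}+t/(\mathfrak{C}_r n^{1-\delta})$, which after absorbing $\sqrt{t}\le\sqrt{T}$ into the implicit constant is dominated by $t\max\{n^{-\delta},\,1/(\mathfrak{C}_r n^{1-\delta})\}$. The principal difficulty lies in balancing the iteration length $K$ against the growth of $q^K$: taking $K$ too small leaves a too-large far-end residual, while taking $K$ too large forces the iteration to collide with the non-bulk sites $\{1,2,n-1,n\}$ where the simple bulk identity breaks down. The hypothesis $a_n+b_n-c_n=\gamma/n$ is precisely what keeps $q^K$ bounded for $K$ up to order $n$, giving room to terminate the iteration safely in the interior while the scales $n^{-\delta}$ and $1/(\mathfrak{C}_r n^{1-\delta})$ emerge naturally from the choice $K\sim n^{1-\delta}$.
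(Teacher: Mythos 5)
Your Dynkin iteration has a genuine gap at the step where you terminate it. Each application of the bulk identity trades the pair difference at $(x-1,x)$ for the one at $(x,x+1)$, at the price of an additive error of order $\sqrt{t}/(c_n n)$ and a multiplicative factor $q$. After $K$ steps you are still holding a quantity of the form $q^K\,\bb E_{\mu_n}\big|\int_0^t(\eta_{sn^2}(x_K+1)-\eta_{sn^2}(x_K))\,ds\big|$, and the only bound you offer for it is the trivial one, $t$. Since $q^K=O(1)$, this leftover contributes $O(t)$ to the final estimate --- i.e.\ nothing better than the trivial bound on the original quantity --- and the claimed contribution $t/(\mathfrak C_r n^{1-\delta})$ appears out of nowhere: there is no mechanism in your argument that makes a nearest-neighbour difference deep in the bulk small. (A telescoping/averaging variant does not obviously rescue this either: the per-step martingale errors of order $\sqrt t/n$ are driven by the same Poisson clocks, and because a single top-to-bottom insertion shifts the \emph{entire} configuration, the martingale attached to an extensive observable with $O(1)$ increments has quadratic variation of order $n^4t$, so no square-root cancellation is available.) Two smaller issues: your argument invokes $a_n+b_n-c_n=\gamma/n$ and $c>0$ to control $q^K$, hypotheses that are not part of the lemma as stated; and a $\sqrt t$ error term is not dominated by $t$ uniformly for small $t$, so the final absorption step is also not quite right.

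For contrast, the paper proves the lemma by a replacement-lemma argument that never touches the bulk: the entropy inequality (with $H(\mu_n|\nu_\varrho)\le Cn$) reduces the estimate to the stationary measure at cost $C/A$, the Feynman--Kac formula bounds the resulting exponential moment by a variational problem penalized by $\tfrac{n}{A}\mf D_n(\sqrt f;\nu_\varrho)$, and the observable $\eta(1)-\eta(r)$ is rewritten, via the change of variables $\eta\mapsto\eta^{1\to n}$ (resp.\ $\eta\mapsto\eta^{1\leftrightarrow 2}$) under which $\nu_\varrho$ is invariant, as $\int\eta(1)\big(f(\eta)-f(\eta^{1\to n})\big)d\nu_\varrho$; Young's inequality then bounds this by $\tfrac{2A}{\mathfrak C_r n}+\tfrac{n}{A}\mf D_n(\sqrt f;\nu_\varrho)$, the Dirichlet form cancels against the Feynman--Kac penalty, and the choice $A=n^\delta$ produces exactly the two scales $n^{-\delta}$ and $(\mathfrak C_r n^{1-\delta})^{-1}$. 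The crucial point your approach misses is that the difference $\eta(1)-\eta(r)$ is precisely the increment of a single elementary move of the dynamics whose rate is $\mathfrak C_r$, so it is controlled directly by one term of the Dirichlet form, with no need to propagate anything through the bulk.
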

The next lemma is necessary to control the martingale appearing in \eqref{use_pat}, which, contrarily to the case $\beta=1$ does not vanish.
\begin{lemma}\label{martingale limit}
 The sequence of martingales $\{ \mcb M_t^{n,2}(f); t \geq 0 \}_{n \in \bb N}$ introduced in \eqref{eq:martingale} with $\beta=2$, converges, in distribution as $n\to+\infty$, to a mean-zero martingale $\mcb M^2_t(f)$ with quadratic variation given by
	$$
	\left\< \mcb M^2(f) \right\>_t:=2c\int_0^t \langle \pi^2_s,\nabla f\rangle ^2 ds.
	$$
\end{lemma}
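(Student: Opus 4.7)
The plan is to apply a martingale functional central limit theorem for cadlag, locally square-integrable martingales in the Rebolledo/Whitt style (see for instance \cite[Chapter 7, Theorem 1.4]{claudios}). Once tightness of $\{\mcb M^{n,2}(f)\}_n$ has been established (which follows from the tightness argument of the previous subsection applied to the martingale decomposition \eqref{eq:martingale}), two ingredients remain: (i) negligibility of the maximal jump $\sup_{t\le T}|\mcb M_t^{n,2}(f)-\mcb M_{t^-}^{n,2}(f)|$, and (ii) convergence in probability of the predictable quadratic variation $\langle\mcb M^{n,2}(f)\rangle_t$ to $2c\int_0^t\langle\pi_s^2,\nabla f\rangle^2\,ds$. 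These two hypotheses, together with the already-established weak convergence of $\{\pi^{n,2}\}$, will yield the stated convergence in distribution and continuity of the limiting martingale.

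Ingredient (i) is essentially immediate. A single transition of the chain corresponds either to the transposition of two adjacent positions or to a cyclic shift of the positions $\{1,\ldots,n\}$. Using summation by parts for the shift transitions, one checks that in either case the instantaneous change $|\langle\pi_t^{n,2},f\rangle-\langle\pi_{t^-}^{n,2},f\rangle|$ is of order $\|\nabla f\|_\infty/n$, uniformly in $t$ and in the trajectory, which vanishes as $n\to\infty$.

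For ingredient (ii) I start from the explicit expression \eqref{qv01} with $\beta=2$. The last three (boundary) terms carry prefactors $n^{-1}$ or $n^{-2}$ multiplying uniformly bounded integrands, hence vanish in $L^1(\mathbb P_{\mu_n})$ uniformly on $[0,T]$. For the two bulk terms, the relation $a_n+b_n-c_n=\gamma/n$ together with $c_n\to c$ gives $a_n+b_n\to c$, and since $f\in C^\infty(\bb T)$ one may replace $\nabla_n^+ f$ by $\nabla_n^- f$ at a cost $O(n^{-1})$. Up to an error $R_n(t)$ tending to zero in probability one thus obtains
\[
\langle\mcb M^{n,2}(f)\rangle_t = (a_n+b_n+c_n)\int_0^t \bigl(\pi_{s}^{n,2}(\nabla_n^- f)\bigr)^2\,ds + R_n(t),
\]
with prefactor converging to $2c$. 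Since $\nabla_n^- f\to\nabla f$ uniformly and, by Theorem \ref{hydrodynamic limit 3}, any limit point of $\{\pi^{n,2}\}$ is supported on continuous trajectories of absolutely continuous measures with density bounded by one, the functional $\pi_\cdot\mapsto\int_0^t(\pi_s(\nabla f))^2\,ds$ is continuous at such trajectories. The continuous mapping theorem then identifies the limit in probability as $2c\int_0^t\langle\pi_s^2,\nabla f\rangle^2\,ds$.

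The most delicate point is ensuring the \emph{joint} convergence of $(\pi^{n,2},\mcb M^{n,2}(f),\langle\mcb M^{n,2}(f)\rangle)$, so that the limit $\mcb M^2(f)$ is identified as a martingale with respect to the natural filtration generated by the limit $\pi^2$. I would address this by passing to a Skorokhod representation of the joint law and using that $\mcb M^{n,2}(f)$ is a measurable functional of the trajectory of $\pi^{n,2}$ (via \eqref{eq:martingale}); combined with the uniform control of the quadratic variation above, this identifies the martingale and closes the proof.
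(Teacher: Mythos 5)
Your proposal is correct and follows essentially the same route as the paper: the paper invokes the Jacod--Shiryaev martingale convergence theorem (Theorem VIII.3.11) and verifies exactly your two ingredients, with the identical jump bound $\|\nabla f\|_{\infty}/n$ obtained by summation by parts, and with condition (iii) read off from the explicit quadratic variation \eqref{qv01} with $\beta=2$. If anything, your treatment of the convergence of the quadratic variation (boundary terms of order $n^{-1}$, the replacement $\nabla_n^+f\mapsto\nabla_n^-f$, the prefactor $a_n+b_n+c_n\to 2c$, and the joint-convergence issue) is more explicit than the paper's, which simply asserts that condition (3) follows because $\pi^2$ is a distributional limit point of $\pi^{n,2}$.
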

\begin{proof}[Proof of Lemma~\ref{replacement1}]
First observe that $\mu_n$ is a general measure  on $\Omega_n$ and by the entropy inequality we can pay a price to replace it by the stationary measure, the Bernoulli product measure $\nu_\varrho$. A simple computation shows that  there exists a constant $C=C(\varrho)$ such that 
\begin{flalign}\label{remark entropy}
	H(\mu|\nu_\varrho)= \sum_{\eta \in \Omega_n} \mu (\eta) \log{\Big( \frac{\mu(\eta)}{\nu_\varrho(\eta)} \Big)}\leq C\,n.
\end{flalign}
This together with  Jensen's inequality, gives for any $A>0$ that
	\begin{flalign}\label{d2}
		\bb E_{\mu_n}\Bigg[ \Big| \int_0^t (\eta_{sn^2}(1)-\eta_{sn^2}(r)) ds \Big| \Bigg] \leq \frac CA + \frac{1}{A\,n} \log \bb E_{\nu_\varrho}\Bigg[ \exp \Big\{ A\,n\,\Big| \int_0^t (\eta_{sn^2}(1)-\eta_{sn^2}(r)) ds \Big|  \Big\} \Bigg].
	\end{flalign}
Since $e^{|x|}\leq e^{x}+e^{-x}$ and
	$$
	\limsup_{n \to \infty}  \log (B_n + C_n)=\max \left\{  \limsup_{n \to \infty}  \log (B_n)\,,\,\limsup_{n \to \infty} \log (C_n) \right\},
	$$
	we can remove the absolute value on the right-hand side of (\ref{d2}). From  Feynman-Kac's formula,  the  rightmost term  of \eqref{d2} is bounded from above by
	\begin{flalign}\label{d1}
 \int_0^t \, \sup_{f}\,\left\{ \int  (\eta(1)- \eta(r))\,f(\eta)  \,\nu_\varrho\,(d\eta) - \frac{n}{A}\langle \mcb L_n\sqrt f,\sqrt f\rangle_{\nu_\varrho} \right\} ds,
	\end{flalign}
	where the supremum runs over all densities $f$ with respect to $\nu_\varrho$. Now we introduce 
	the quadratic operator  given on  $g:\Omega_n\to\bb R$ by 
\begin{flalign}\label{dirichlet form}
	\mf D_n\,(g;\nu_\varrho) &= \frac{a_n}{2} \int \left( \sqrt{ g(\eta^{1 \rightarrow n-1})}-\sqrt{  g(\eta)}\right)^2 d\nu_\varrho +  \frac{b_n}{2} \int \left(\sqrt { g(\eta^{1 \rightarrow n})}-\sqrt{ g(\eta)}\right)^2 d\nu_\varrho\\& +  \frac{c_n}{2} \int \left(\sqrt{ g(\eta^{n \rightarrow 1})}-\sqrt{ g(\eta)}\right)^2 d\nu_\varrho+ \frac{d_n}{2}\int \left(\sqrt{ g(\eta^{1 \leftrightarrow 2})}-\sqrt{ g(\eta)}\right)^2 d\nu_\varrho\nonumber.
\end{flalign}
A simple computation shows that 
	the  Dirichlet form associated with the generator $ \mcb  L_n$, i.e.~ $\langle \mcb L_n g,g\rangle_{\nu_\varrho}$ is given by 	$\mf D_n\,(g;\nu_\varrho)$. 
Now, we write the leftmost term in the supremum in \eqref{d1} as one-half plus one-half of itself, and in one of the parcels we make  the change of variables $\eta \mapsto \eta^{1 \rightarrow r}$ for $r=n$ or  $\eta \mapsto \eta^{1 \leftrightarrow r}$ for $r=2$, and note that the probability measure $\nu_\varrho$ is invariant for both transformations. Then, from the elementary identity $(x-y)=(\sqrt{x}-\sqrt{y})(\sqrt{x}+\sqrt{y})$ and  Young's  inequality, we can bound that term from above by
	\begin{flalign*}
		\frac{B}{2}\,\int  \eta(1)\,\left( \sqrt{f(\eta^{1 \rightarrow n})}+\sqrt{f(\eta)}   \right)^2  d\nu_\varrho+\frac{1}{2B}\,\int  \left( \sqrt{f(\eta^{1 \rightarrow n})}-\sqrt{f(\eta)}  \right)^2  d\nu_\varrho,
	\end{flalign*}
	for $r=n$ and a similar expression for $r=2$. Above $B>0$. 
Recalling (\ref{dirichlet form}), using the fact that $f$ is a density with respect to $\nu_\varrho$ and choosing $	B=A/(b_n n)$, last display is bounded from above by 
$$
		\frac{2A}{b_n n} + \frac{n}{A} \, \mf D_n \,(\sqrt{f};\nu_\varrho).
$$
Taking $A=n^\delta$ the proof ends. For $r=2$, we repeat the same strategy as above and we take  instead $B=A/(d_n n)$.
\end{proof}

In order to prove Lemma~\ref{martingale limit}, we invoke \cite[Theorem VIII, 3.11]{jacod}, which we state in our setting as:

\begin{theorem}\label{jacodtheo}
	Let $\{M_t^n \,; \,t \in [0,T]\}_{ n \in \bb N }$ be a sequence of martingales in $\mcb D ([0,T],\bb R)$ with quadratic variation $\left\< M^n\right\>_t$. Assume that
	\begin{enumerate}
		\item For any $n \in \bb N$ the quadratic variation process $\left\< M^n\right\>_t$ has continuous trajectories almost surely;\vspace {0,2cm}
		\item $\lim_{n \to \infty} \bb E \Bigg[ \sup_{t \in [0,T]} | M_t^n - M_{t^-}^n | \Bigg]=0$;\vspace {0,2cm}
		\item $\left\< M^n\right\>_t$ converges to $\left\< M\right\>_t $, in distribution, as $n \to \infty$.\vspace {0,2cm}
	\end{enumerate}
	Then,  $\{M_t^n \,; \,t \in [0,T]\}_{ n \in \bb N }$ converges, in distribution, to  a mean zero martingale $\{M_t \}_{t \in [0,T] }$ with continuous trajectories and with quadratic variation  $\left\< M\right\>_t$.
\end{theorem}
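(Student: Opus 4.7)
The plan is to establish convergence in distribution via a three-step strategy: (i) tightness of $\{M^n\}_{n\in\bb N}$ in $\mcb D([0,T],\bb R)$; (ii) identification of any subsequential weak limit as a continuous mean-zero martingale whose quadratic variation coincides with the limit $\langle M\rangle$ prescribed by condition (3); and (iii) uniqueness in law of such a limiting martingale, which upgrades subsequential convergence to convergence of the whole sequence.

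For step (i), I would invoke the Aldous--Rebolledo criterion, which reduces tightness of a sequence of c\`adl\`ag martingales in $\mcb D([0,T],\bb R)$ to tightness of the associated sequence of quadratic variation processes. Condition (3) directly yields convergence in distribution of $\{\langle M^n\rangle\}$, hence tightness, and since by condition (1) each $\langle M^n\rangle$ is continuous and non-decreasing, the limit $\langle M\rangle$ inherits these properties, giving C-tightness of the QV sequence. Combined with the Burkholder--Davis--Gundy inequality $\bb E[\sup_{s\leq T}(M^n_s)^2]\leq C\,\bb E[\langle M^n\rangle_T]$, which provides a uniform $L^2$ control on the martingales, and with condition (2), which ensures that the maximal jump $\sup_t|M^n_t-M^n_{t^-}|$ becomes negligible in $L^1$, this closes the C-tightness of $\{M^n\}$.

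For step (ii), I pick a weakly convergent subsequence with limit $M$ and apply Skorokhod's representation theorem to realize the sequence on a probability space where $(M^n,\langle M^n\rangle)\to(M,\langle M\rangle)$ almost surely in the Skorokhod topology. Condition (2) then forces $M$ to have continuous sample paths almost surely, and Skorokhod convergence to a continuous process strengthens to local uniform convergence. The uniform $L^2$ bound delivered by BDG yields uniform integrability, which permits passage to the limit in the defining relation $\bb E[M^n_t\mid\mathcal F^n_s]=M^n_s$ and, analogously, in the relation for $(M^n_t)^2-\langle M^n\rangle_t$. This identifies $M$ as a continuous mean-zero martingale with quadratic variation $\langle M\rangle$.

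For step (iii), I invoke the Dambis--Dubins--Schwarz theorem: every continuous mean-zero martingale with continuous non-decreasing quadratic variation $A$ admits a representation $M_t=\mathcal B_{A_t}$ for some standard Brownian motion $\mathcal B$ on a suitable enlargement of the probability space, so its law is determined by the law of $A$, which is fixed by condition (3). Combined with tightness, this uniqueness promotes subsequential convergence to convergence of the whole sequence. I expect the main obstacle to be the passage to the limit in $(M^n)^2-\langle M^n\rangle$: converting the $L^1$ control on $\langle M^n\rangle$ into uniform integrability of $\{(M^n_t)^2\}$ is not automatic from condition (3) alone, and I would address it by localization, stopping at $\tau_K=\inf\{t\geq 0:\langle M^n\rangle_t\geq K\}$, applying BDG on the stopped martingale $M^n_{\cdot\wedge\tau_K}$, and letting $K\to\infty$ after passing to the limit in $n$.
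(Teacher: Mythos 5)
First, for context: the paper does not prove this statement at all --- it is quoted as a restatement ``in our setting'' of Theorem VIII.3.11 of Jacod--Shiryaev \cite{jacod} --- so your proposal must be judged on its own merits against the standard proof of that result. Your steps (i) and (ii) do follow the standard route: Rebolledo/Aldous tightness transferred from the quadratic variations, condition (2) forcing continuity of limit points and upgrading Skorokhod convergence to locally uniform convergence, and passage to the limit in the martingale relations for $M^n$ and $(M^n)^2-\langle M^n\rangle$ via uniform integrability. You correctly flag that condition (3) gives no moment bounds, and your localization at $\tau_K=\inf\{t\geq 0:\langle M^n\rangle_t\geq K\}$ is the standard repair; two minor cautions are that BDG controls $\mathbb E[\sup_t(M^n_t)^2]$ by the optional bracket $\mathbb E[[M^n,M^n]_T]$ rather than $\mathbb E[\langle M^n\rangle_T]$ (these agree in expectation after stopping, so your fix covers it), and that $\tau_K$ is not a continuous functional of the path, so one must work with levels $K$ that the continuous limit $\langle M\rangle$ crosses strictly, a standard but nontrivial point.

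The genuine gap is in step (iii). Dambis--Dubins--Schwarz gives $M_t=\mathcal B_{A_t}$, but the Brownian motion $\mathcal B$ is \emph{not} independent of the time change $A$, so the law of a continuous martingale is \emph{not} determined by the law of its quadratic variation when $A$ is random. Concretely: let $W$ be a Brownian motion, $\xi$ an independent fair coin, and set $X_t=\int_0^t\big(\mathbb{1}_{\{s\leq 1\}}+\mathbb{1}_{\{s>1\}}\mathbb{1}_{\{W_1>0\}}\big)dW_s$ and $Y_t=\int_0^t\big(\mathbb{1}_{\{s\leq 1\}}+\mathbb{1}_{\{s>1\}}\mathbb{1}_{\{\xi=1\}}\big)dW_s$. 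Both are continuous square-integrable martingales whose quadratic variations have the identical law $t\wedge 1+\mathbb{1}\cdot(t-1)^+$ with a fair coin, yet their laws differ: $\mathbb P(X_2\neq X_1,\,X_1<0)=0$ while $\mathbb P(Y_2\neq Y_1,\,Y_1<0)=1/4$. Taking $M^n=X$ for $n$ even and $M^n=Y$ for $n$ odd satisfies hypotheses (1)--(3) (no jumps, continuous QV, identical QV law) but $\{M^n\}$ has no weak limit --- so with a genuinely random limiting $\langle M\rangle$, uniqueness in law of the subsequential limits cannot be extracted from these hypotheses, and your step (iii) cannot be completed as written. The cited Theorem VIII.3.11 of \cite{jacod} assumes the limiting quadratic variation is a deterministic continuous function $C_t$, in which case uniqueness is immediate and does not even need DDS: a continuous martingale with deterministic QV is a Gaussian process with independent increments, whose law is determined by $C$. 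In the paper's actual application (Lemma \ref{martingale limit}) the limit QV $2c\int_0^t\langle\pi_s^2,\nabla f\rangle^2ds$ is random, and there the identification is carried out jointly with the limit of the empirical measure, with uniqueness of the overall limit supplied separately by the martingale-problem uniqueness of Theorem \ref{theo:uni_mp}; your proof, to be correct, would need either to assume $\langle M\rangle$ deterministic or to incorporate such joint convergence information.
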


\begin{proof}[Proof of Lemma~\ref{martingale limit}]	
We apply Theorem~\ref{jacodtheo} to the sequence $\{\mcb M_t^{n,2}(f);\, t \geq 0\}_{ n \in \bb N}$ of martingales given in (\ref{eq:martingale}) with $\beta=2$.  
From \eqref{eq:martingale_1} the condition (1) is trivially satisfied.   In order to verify (2), observe that at each time there is at most one type of change, either $\eta^{1 \rightarrow n-1}$, or $\eta^{1 \rightarrow n}$ or $\eta^{n \rightarrow 1}$ or $\eta^{1 \leftrightarrow 2}$.  Let us suppose that at time $t
$ it happens $\eta^{1 \rightarrow n}$, the others can be treated analogously. Note that in this case, we have $\eta_t(x-1)=\eta_{t^-}(x)$. Therefore, for  any function $f \in C^\infty(\bb T)$ we have that
	\begin{flalign*}
		\Big|\mcb M_t^{n,2}(f)-\mcb M_{t^-}^{n,2}(f)\Big| &= \Big| \pi_{t}^{n,2} (f) -  \pi_{t^-}^{n,2} (f)\Big|=\Big| \frac{1}{n^2}\sum_{x\in\mathbb Z_n}\nabla_n^+f(\tfrac xn)\eta_{tn^2}(x)\Big|\leq \frac{\| \nabla f \|_{\infty}}{n}, 
	\end{flalign*}
	which trivially implies (2).
Finally,  note that for $\beta=2$, the quadratic variation in \eqref{eq:martingale_1} is given by
\begin{flalign}\label{qv02}
	\left\<  \mcb M^{n,2}(f) \right\>_t&= \int_0^t (a_n+b_n)  \langle \pi_{s}^{n,2},\nabla_n^{-} f\rangle ^2 ds + \int_0^t c_n  \langle  \pi_{s}^{n,2},\nabla_n^{+} f\rangle ^2 ds\\
	&+\int_0^t 2 a_n n^{-1} (\eta_{sn^{2}}(1)-\eta_{sn^{2}}(n)) \nabla_n^- f(0) \langle \pi_{s}^{n,2},\nabla_n^{-} f\rangle ds \nonumber\\
	& +\int_0^t a_n n^{-2} (\eta_{sn^{2}}(1)-\eta_{sn^{2}}(n))^2 \left( \nabla_n^- f(0) \right)^2 ds \nonumber\\& + \int_0^t d_n n^{-2} (\eta_{sn^{2}}(1)-\eta_{sn^{2}}(2))^2 \left( \nabla_n^+ f\left(\tfrac{1}{n}\right) \right)^2 ds. \nonumber
\end{flalign}
From last identity, condition  (3) follows from the fact that $\{\pi_t^2\}_{t\geq 0}$ is a limit point, in distribution, of $ \{\pi_{t}^{n,2}\}_{t\geq 0}$. 
\end{proof}

\section{Equilibrium fluctuations }\label{s5}

In this section we prove Theorems~\ref{clt1} and \ref{clt 3}. 

\subsection{Tightness}\label{sec:tigh_flu}
We start by showing tightness of $\{\widetilde {\bb Q}_{n,\beta}\}_{n\in\bb N} $. As above we use Lemma \ref{criterion} and we start by showing (1). Note that  by Dynkin's formula, there exist two mean-zero martingales $\{\widetilde{\mcb M}_t^{n,\beta}(f)\}_{t\geq 0}$ and $\{\widetilde{\mcb N}_t^{n,\beta}(f)\}_{t\geq 0}$ as given in \eqref{eq:martingale} and \eqref{eq:martingale_1}, respectively. We start the proof of (1) in Lemma \ref{criterion} for  the hyperbolic case, i.e.~  $\beta=1$. Then, for $f \in C^{\infty}( \bb T)$ we obtain
\begin{flalign}\label{martingale0f}
\widetilde{\mcb M}_t^{n,1}(f)&=\mcb Y_{t}^{n,1}(f)-\mcb Y_{0}^{n,1}(f) 
	+ (a_n+b_n)  \int_0^t \mcb Y_{s}^{n,1}(\nabla_n^- f) ds -c_n  \int_0^t \mcb Y_{s}^{n,1}(\nabla_n^+ f) ds\\ & +a_n n^{-1/2} \int_0^t \left( \eta_{s n}(1)-\eta_{s n}(n) \right) \nabla_n^- f\left(0 \right) ds - d_n n^{-1/2} \int_0^t \left( \eta_{s n}(1)-\eta_{s n}(2) \right) \nabla_n^+ f\left(\tfrac{1}{n} \right)ds,\nonumber
\end{flalign}
and
$\widetilde{\mcb N}_t^{n,1}(f):=\left(\widetilde{\mcb M}_t^{n,1}(f)\right)^2 - \left\<  \widetilde{\mcb M}^{n,1}(f)\right\>_t,
$
where
\begin{flalign}\label{qv20}
\left\<  \widetilde{\mcb M}^{n,1}(f)\right\>_t&=\int_0^t (a_n+b_n) n^{-1} \left( \mcb Y_{s}^{n,1}(\nabla_n^{-} f) \right)^2 ds + \int_0^t c_n n^{-1} \left( \mcb Y_{s}^{n,1}(\nabla_n^{+} f) \right)^2 ds \\
	&+ \int_0^t 2 a_n n^{-2} (\eta_{sn}(1)-\eta_{sn}(n)) \nabla_n^- f(0) \mcb Y_{s}^{n,1}(\nabla_n^{-} f) ds \nonumber\\
	& + \int_0^t a_n n^{-2} (\eta_{sn}(1)-\eta_{sn}(n))^2 \left( \nabla_n^- f(0) \right)^2 ds \nonumber\\& +  \int_0^t d_n n^{-2} (\eta_{sn}(1)-\eta_{sn}(2))^2 \left( \nabla_n^+ f\left(\tfrac{1}{n}\right) \right)^2 ds. \nonumber
\end{flalign}
Recall  $\psi_k$ defined in (\ref{psi}). To prove  (1) we note that by Doob's inequality,  last identity and the fact that $\nu_\varrho$ is invariant, we get 
\begin{flalign*}
	\bb E_{\nu_\varrho}&\left[ \sup_{t \in [0,T]} |\widetilde{\mcb M}_t^{n,1}(\psi_k)|^2  \right] \leq 4   \bb E_{\nu_\varrho}\Bigg[  |\widetilde{\mcb M}_T^{n,1}(\psi_k)|^2  \Bigg] \nonumber  \\& = 4 \bb E_{\nu_\varrho}\left[  \int_0^T (a_n+b_n) n^{-1} \left( \mcb Y_{s}^{n,1}(\nabla_n^{-} \psi_k) \right)^2 ds + \int_0^t c_n n^{-1} \left(\mcb Y_{s}^{n,1}(\nabla_n^{+} \psi_k) \right)^2 ds \right] +\frac{(a_n+d_n)k^2 T}{n^2}=\mcb O(n^{-1}). 
\end{flalign*}
A simple computation shows that 
\begin{flalign*}
\bb E_{\nu_\varrho}\Bigg[  |\mcb Y_{0}^{n,1}(\psi_k) |^2  \Bigg]= \frac{\varrho(1-\varrho)}{n}\sum_{x \in \bb Z_n} |\psi_k(\tfrac xn)|^2.
\end{flalign*}
Finally, 
\begin{flalign*}
	\bb E_{\nu_\varrho}\Bigg[ \sup_{t \in [0,T]} \Big|\int_0^t  \mcb Y_{s}^{n,1}(\nabla \psi_k) ds\Big|^2  \Bigg] \leq T \, \bb E_{\nu_\varrho}\Bigg[  \int_0^T\Big| \mcb Y_{s}^{n,1}(\nabla \psi_k)\Big|^2 ds  \Bigg] \leq  \frac{\varrho(1-\varrho)T^2}{n}\sum_{x \in \bb Z_n} |\nabla \psi_k(\tfrac xn )|^2. 
\end{flalign*}
From the previous results we can easily show that 
\begin{flalign*}
	\limsup_{n \to \infty} \bb P_{\nu_\varrho} \left(  \sup_{t \in [0,T]} \|\mcb Y_t^{n,1}\|_{-m}>A \right)& \leq \frac{T^2+1  }{A^2}\Big(1+ 2    \sum_{k=1}^\infty \Big\{  1+4 \pi^2 k^2 \Big\}^{1-m}\Big).
\end{flalign*}
Note that the series above converges for $m>3/2$.\\

Now we prove (1) for $\beta=2$. Note that in this case
\begin{flalign}\label{martingalef}
 \widetilde{\mcb M}_t^{n,2}(f)&=\mcb Y_{t}^{n,2}(f)-\mcb Y_0^n(f) - c_n   \int_0^t \mcb Y_0^{n,2}(\Delta_n f) ds -\left(c_n- (a_n+b_n)\right)n \int_0^t \mcb Y_{s}^{n,2}(\nabla_n^- f) ds\\
	& +a_n \sqrt{n} \int_0^t \left( \eta_{s n^{2}}(1)-\eta_{s n^{2}}(n) \right) \nabla_n^- f\left(0 \right) ds - d_n \sqrt{n} \int_0^t \left( \eta_{s n^{2}}(1)-\eta_{s n^{2}}(2) \right) \nabla_n^+ f\left(\tfrac{1}{n} \right)ds, \nonumber
\end{flalign}
and the quadratic variation equals to 
\begin{flalign}\label{qv2_flu}
\left\<   \widetilde{\mcb M}^{n,2}(f) \right\>_t&
=\int_0^t (a_n+b_n)  \left( \mcb Y_{s}^{n,2}(\nabla_n^{-} f) \right)^2 ds + \int_0^t c_n  \left( \mcb Y_{s}^{n,2}(\nabla_n^{+} f) \right)^2 ds \\
&+ \int_0^t 2 a_n n^{-1} (\eta_{sn^{2}}(1)-\eta_{sn^{2}}(n)) \nabla_n^- f(0) \mcb Y_{s}^{n,2}(\nabla_n^{-} f) ds \nonumber\\
& + \int_0^t a_n n^{-1} (\eta_{sn^{2}}(1)-\eta_{sn^{2}}(n))^2 \left( \nabla_n^- f(0) \right)^2 ds \nonumber\\& +  \int_0^t d_n n^{-1} (\eta_{sn^{2}}(1)-\eta_{sn^{2}}(2))^2 \left( \nabla_n^+ f\left(\tfrac{1}{n}\right) \right)^2 ds. \nonumber
\end{flalign}

From Doob's inequality, (\ref{qv2_flu}) and the fact that $\nu_\varrho$ is invariant, we get  that 
\begin{flalign}
	\bb E_{\nu_\varrho}&\Bigg[ \sup_{t \in [0,T]} \Big|\widetilde{\mcb M}_t^{n,2}(\psi_k)\Big|^2  \Bigg] \lesssim \frac{ (a_n+b_n)T}{n} \sum_{x \in \bb Z_n}  |\nabla_n^{-} \psi_k (\tfrac xn)|^2 + \frac{c_n T}{n} \sum_{x \in \bb Z_n}  |\nabla_n^{+} \psi_k (\tfrac xn)|^2 +  \frac{ (a_n+d_n) T k^2} {n}. \label{d200}
\end{flalign}
The initial condition can be estimated as in the previous case. Moreover, as above we have 
\begin{flalign}\mathbb E_{\nu_\varrho}\Bigg[ \sup_{t \in [0,T]} \Big|\int_0^t  \mcb Y_{s}^{n,2}(\nabla_n^- \psi_k) ds\Big|^2  \Bigg] \leq  \frac{\varrho(1-\varrho)T^2}{n}\sum_{x \in \bb Z_n} |\nabla_n^- \psi_k(\tfrac xn)|^2.\label{d4} \end{flalign}
 The contribution of the boundary terms in  \eqref{martingalef} is estimated in the next lemma whose proof we present at the end of the proof of (1).

\begin{lemma}\label{replacement21}
	Let $r\in\{2,n\}$ and assume that if $r=2$ then $d_n>0$ or if $r=n$ then $b_n>0$.  For any $t \in [0,T]$ we have
	\begin{flalign*}
		\bb E_{\nu_\varrho} \Bigg[ \sup_{t \in [0,T]}\Big|   \sqrt{n} \int_0^{t} \left( \eta_{s n^{2}}(1)-\eta_{s n^{2}}(r) \right)   ds \Big|^2 \Bigg]\lesssim T\frac{1} { \mathfrak C_r n},		
	\end{flalign*}
	where we recall that $~\mathfrak C_r=d_n\textbf{1}_{r=2}+b_n \textbf{1}_{r=n}$.	
\end{lemma}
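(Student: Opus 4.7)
The plan is to invoke the Kipnis--Varadhan inequality for additive functionals of a stationary Markov process. In the form valid also for non-reversible generators (see the classical Kipnis--Varadhan work, or e.g.\ the monograph of Komorowski--Landim--Olla), after the natural time rescaling $s\mapsto sn^{2}$ it reads
\begin{flalign*}
\bb E_{\nu_\varrho}\!\Bigl[\sup_{0 \leq t \leq T}\Bigl|\sqrt{n}\int_0^t V_r(\eta_{sn^2})\,ds\Bigr|^2\Bigr] \;\leq\; \frac{C\,T}{n}\,\|V_r\|_{-1,\mcb L_n}^{2},
\end{flalign*}
where $V_r(\eta):=\eta(1)-\eta(r)$ is centered under $\nu_\varrho$ and the $H^{-1}$ seminorm is $\|V_r\|_{-1,\mcb L_n}^{2} := \sup_{f}\{2\langle V_r, f\rangle_{\nu_\varrho} - \langle f, -\mcb L_n f\rangle_{\nu_\varrho}\}$. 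The task thus reduces to proving $\|V_r\|_{-1,\mcb L_n}^{2} \lesssim 1/\mathfrak C_r$.

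I would then use a Dirichlet-form comparison. Since $\nu_\varrho$ is invariant under each of the four elementary moves that make up $\mcb L_n$, the identity $\int f^{2}d\nu_\varrho - \int f\cdot(f\circ\sigma)d\nu_\varrho=\tfrac{1}{2}\int(f - f\circ\sigma)^{2}d\nu_\varrho$ shows that $\langle f, -\mcb L_n f\rangle_{\nu_\varrho}$ is a sum of four non-negative pieces of the form $(\text{rate}/2)\|f - f\circ\sigma\|_{L^2(\nu_\varrho)}^{2}$. Dropping all but the rate-$\mathfrak C_r$ contribution gives
\begin{flalign*}
\langle f, -\mcb L_n f\rangle_{\nu_\varrho} \;\geq\; \frac{\mathfrak C_r}{2}\int (f - f \circ \sigma_r)^{2}\,d\nu_\varrho,
\end{flalign*}
where $\sigma_r$ is the transposition $\eta^{1 \leftrightarrow 2}$ for $r = 2$ and the shift $\eta^{1 \to n}$ for $r = n$. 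Both choices of $\sigma_r$ share the key property $(\sigma_r\eta)(r) = \eta(1)$, which motivates the test function $h_r(\eta) := -\eta(r)/\mathfrak C_r$. A direct computation gives $h_r - h_r \circ \sigma_r = V_r/\mathfrak C_r$ and in turn $2\langle V_r, h_r\rangle_{\nu_\varrho} = 2\varrho(1-\varrho)/\mathfrak C_r$ together with $(\mathfrak C_r/2)\int (h_r - h_r \circ \sigma_r)^{2} d\nu_\varrho = \varrho(1-\varrho)/\mathfrak C_r$, producing the lower bound $\varrho(1-\varrho)/\mathfrak C_r$ in the variational formula. The matching upper bound follows from Cauchy--Schwarz and Young's inequality applied to the dual identity $\langle V_r, f\rangle_{\nu_\varrho} = \langle \tilde g_r, f - f \circ \sigma_r^{-1}\rangle_{\nu_\varrho}$ with $\tilde g_r := \varrho - \eta(r)$ and $\|\tilde g_r\|_{L^2(\nu_\varrho)}^{2} = \varrho(1-\varrho)$, together with the identity $\|f - f\circ\sigma_r^{-1}\|_{L^2(\nu_\varrho)}=\|f - f\circ\sigma_r\|_{L^2(\nu_\varrho)}$ (again from the $\sigma_r$-invariance of $\nu_\varrho$).

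The main obstacle I expect is the careful justification of the sup-norm Kipnis--Varadhan inequality in our non-reversible setting: although $\nu_\varrho$ is invariant under every elementary move of $\mcb L_n$, detailed balance fails for the shifts $\eta^{1 \to n-1}$, $\eta^{1 \to n}$ and $\eta^{n \to 1}$, so one must use the version of the inequality where the variational $H^{-1}$ norm is built from the symmetric part $(\mcb L_n + \mcb L_n^{*})/2$. Once this is in place, the remainder is the elementary Dirichlet-form comparison described above, and the hypothesis $\mathfrak C_r > 0$ is used precisely to guarantee that the test function $h_r$ is well defined.
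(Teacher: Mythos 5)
Your proposal is correct and follows essentially the same route as the paper: the Kipnis--Varadhan inequality (the paper cites the non-reversible version from Chang--Landim--Olla), followed by rewriting $\langle \eta(1)-\eta(r), f\rangle_{\nu_\varrho}$ as a pairing against $f - f\circ\sigma_r$ via the $\sigma_r$-invariance of $\nu_\varrho$, and then Young's inequality to absorb that term into the single rate-$\mathfrak C_r$ piece of the Dirichlet form. The paper carries the factors of $\sqrt{n}$ and $n^2$ explicitly and optimizes by choosing the Young parameter $A=\mathfrak C_r n^{3/2}$, whereas you normalize into an $H^{-1}$ norm first, but the estimates are the same; your sharpness lower bound via the test function $h_r$ is extra and not needed for the lemma.
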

From the previous arguments, we conclude that 
\begin{flalign*}
	\limsup_{n \to \infty}\, \bb P_{\nu_\varrho} \Big(  \sup_{t \in [0,T]} \| \mcb Y_{t}^{n,2}\|_{-m}>A \Big) \leq \frac{C }{A^2}\Big( 1+2    \sum_{k=1}^\infty \Big\{  1+4\pi^2 k^2+ (2\pi k)^4 \Big\} \Big\{  1+ 4 \pi^2 k^2 \Big\}^{-m}\Big).
\end{flalign*}
The series above converges if $m>5/2$. This ends the proof of (1) in both cases $\beta\in\{1,2\}$. Now we prove Lemma \ref{replacement21}.

\begin{proof}[Proof of Lemma~\ref{replacement21}]
By Kipnis-Varadhan inequality (see~\cite[Lemma 4.3]{chang}), the expectation in the statement of the lemma 
is bounded from above by
\begin{flalign*}
T \sup_{f \in L^2(\nu_\varrho)} \left\{  \sqrt{n} \int (\eta(1)-\eta(r) )f(\eta) d\nu_\varrho- n^2 \mathfrak D_n (f;\nu_\varrho)  \right\}.
\end{flalign*} Above we used the operator $\mathfrak D_n $ introduced in
\eqref{dirichlet form}.
As in the proof of Lemma \ref{replacement1}, we can rewrite, for $r=n$,  the previous expression as
\begin{flalign*}
	T \sup_{f \in L^2(\nu_\varrho)} \left\{  \sqrt{n}\int \eta(1)\, \left(f(\eta) - f(\eta^{1 \rightarrow n})\right) \,  d\nu_\varrho - n^2 \mathfrak D_n (f;\nu_\varrho)  \right\}.
\end{flalign*}
For $r=2$ we get exactly the previous  expression but with the transformation $\eta \mapsto \eta^{1 \leftrightarrow 2}$. From Young's inequality, the expression inside the supremum above is bounded by
\begin{flalign*}
 \frac{A\sqrt{n}}{2} &\int  \left(f(\eta) - f(\eta^{1 \rightarrow n})\right)^2 \, d \nu_\varrho+ \frac{\sqrt{n}\varrho}{2A}  \,\,- n^2 \mathfrak D_n (f;\nu_\varrho),
\end{flalign*}
and for the choice $A= b_n n^{3/2}$ the proof ends.  For $r=2$ we make the choice $A=d_n n^{3/2}$.
\end{proof}

Now we prove (2) of Lemma \ref{criterion}. In both cases, it is enough to show that 
\begin{equation}
\lim_{\delta \to 0} \limsup_{n \to \infty} \bb P_{\nu_\varrho} \Bigg[   \sup_{\substack{|s-t|\leq \delta\\0\leq s, t \leq T}}\Big| \mcb Y_{t}^{n,\beta}(\psi_k)-\mcb Y_{t}^{n,\beta}(\psi_k)\Big|^2 \geq \varepsilon   \Bigg]=0
\end{equation}
for every $k \in \bb Z$. 
Now we start with the case $\beta=1$. Last limit is  a consequence of the next two results.
	For any function $f \in C^\infty(\bb T)$ and any $\varepsilon>0$ we have
	\begin{flalign*}
\bb P_{\nu_\varrho} \Bigg[  \sup_{\substack{|s-t|\leq \delta\\0\leq s, t \leq T}} \Bigg|\int_s^t \mcb Y_s^{n,1} (\nabla f) dr\Big| > \varepsilon \Bigg]  \leq  \frac{\delta}{ \varepsilon^2} \bb E_{\nu_\varrho} \Bigg[  \int_0^T \Big|\mcb Y_s^{n,1}  (\nabla f)\Bigg|^2   dr \Bigg]= \frac{\varrho(1-\varrho)\delta T}{ \varepsilon^2 n} \sum_{x \in \bb Z_n} |\nabla f (\tfrac xn)|^2.
	\end{flalign*}
Moreover, for any function $f \in C^\infty(\bb T)$ and any $\varepsilon>0$ we have
	$$
	\lim_{\delta \to 0} \limsup_{n \to \infty}  \bb P_{\nu_\varrho} \Bigg[  \sup_{\substack{|s-t|\leq \delta\\0\leq s, t \leq T}} \Big|\widetilde{\mcb M}_t^{n,1}(f)-\widetilde{\mcb M}_s^{n,1}(f)\Big| > \varepsilon  \Bigg]=0.
	$$
Last result is a consequence of the fact that
	\begin{flalign*}
		\bb P_{\nu_\varrho} \Bigg[ \Big|\widetilde{\mcb M}_{\tau+\theta}^{n,1}(f)-\widetilde{\mcb M}_{\tau}^{n,1}(f)\Big| >\varepsilon    \Bigg] &\leq \mcb O(n^{-1}).
	\end{flalign*}

This proves (2) for the case $\beta=1$. Now we analyse (2) for $\beta=2$, which is a consequence of the next results.
For any function $f \in C^\infty(\bb T)$ and any $\varepsilon>0$ we have
	$$
	\lim_{\delta \to 0} \limsup_{n \to \infty}  \bb P_{\nu_\varrho} \Bigg[  \sup_{\substack{|s-t|\leq \delta\\0\leq s, t \leq T}} \Bigg|\int_s^t \mcb Y_{t}^{n,2} (\nabla_n^- f) dr\Bigg| > \varepsilon  \Bigg]=0,
	$$
	And the same limit above holds for $\Delta_n f$ instead of $\nabla_n^- f$.
	From a similar argument to the one used to prove Lemma \ref{replacement21}
	we get for $r\in\{2,n\}$ that
	$$
	\lim_{\delta \to 0} \limsup_{n \to \infty}  \bb P_{\nu_\varrho} \Bigg[  \sup_{\substack{|s-t|\leq \delta\\0\leq s, t \leq T}} \Big|   \sqrt{n} \int_s^t \left( \eta_{r n^{2}}(1)-\eta_{r n^{2}}(r) \right) dr   \Big| > \varepsilon  \Bigg]=0.
	$$
Finally, as above it remains to check that for every $\varepsilon>0$
	$$
	\lim_{\delta \to 0} \limsup_{n \to \infty} \sup_{\substack{\tau \in \mathcal T_T\\\theta \in [0,\delta]}} \bb P_{\nu_\varrho} \Bigg[\Big |\widetilde{\mcb M}_{(\tau+\theta)\wedge T}^{n,2}(f)-\widetilde{\mcb M}_{\tau}^{n,2}(f)\Big| >\varepsilon    \Bigg]=0, 
	$$
	 and as in the hyperbolic case it is easy to check that 
	\begin{flalign*}
		\bb P_{\nu_\varrho} &\Bigg[\Big |\widetilde{\mcb M}_{(\tau+\theta)\wedge T}^{n,2}(f)-\widetilde{\mcb M}_{\tau}^{n,2}(f)\Big| >\varepsilon \Bigg] \lesssim \frac{\delta}{\varepsilon^2}\varrho(1-\varrho)\Bigg[(a_n+b_n)\|\nabla_n^- f\|_{\infty}^2 + c_n\|\nabla_n^+ f\|_{\infty}^2 \Bigg] +   \frac{(a_n+d_n)k^2 T}{n}.
	\end{flalign*}
	And this ends the proof of tightness.

\subsection{Characterization of the limit points}

In order to conclude the proofs of Theorems~\ref{clt1} and \ref{clt 3}, we  need to show that the limit points are solutions of the martingale problem given in the statements of the results. The hyperbolic case is trivial and we leave the details to the reader. We just note that the limit field satisfies
$$
	0=\mcb Y^1_t(f)-\mcb Y^1_0(f) + \int_0^t  (a+b-c)\mcb Y^1_s(\nabla  f) ds
$$
for any $f\in C^1(\bb T)$
and from this identity it follows that the covariance is given by \eqref{cov_field}.

 In the diffusive case, the result follows as a direct consequence of the expressions given in (\ref{martingalef}) and (\ref{qv2_flu}), Lemmas   \ref{replacement21} and \ref{martingale limit}.

\section{Proof of Theorem \ref{theo:uni_mp}}\label{sec5}

In order to prove the theorem, we   show the uniqueness of strong solutions of
\eqref{arica} and then, we prove that every solution of the martingale problem is a strong solution. Since all the strong solutions have the same, we are done. 

 We define a strong solution to \eqref{arica} in the following way. 
We say that a process $\{X_t\}_{t\geq 0}$ in $ \mcb C([0,T], \mcb H_{-m})$ is a strong solution of \eqref{arica} if for all $f\in\mcb H_{m}$ it holds
\begin{equation}
\label{iquique}
d\langle X_t,f\rangle =\langle X_t, (\nu\Delta-\mcb v\nabla )f\rangle dt-\sigma \langle X_t,\nabla f\rangle d \mcb B_t.
\end{equation}
In order to prove uniqueness of strong solutions, first, recall that the functions $g_k(u)=e^{ 2\pi i k u }$, $u \in \bb T,k \in \bb Z$, form an orthonormal basis of $L^2(\bb T)$. For each $k \in \bb Z$, let $\widehat f(k)=\left\< f,g_k \right\>$ and let us rewrite $X_t$ as a linear combination of the aforementioned eigenfunctions:
\begin{flalign*}
X_t&= \sum_{k \in \bb Z}\widehat X_t(k)\,g_k.
\end{flalign*}
Observe that for each $k$ the process $\widehat X_t(k)$ is solution of
$$
d \widehat X_t(k)=-\lambda_k \widehat X_t(k) dt  +\theta i \widehat X_t(k) d {\mcb  B}_t,
$$
where $\lambda_k=\nu(2 \pi k)^2-2 \pi \mcb v k i$ and $\theta_k=2\pi  k\sigma $. As a consequence, it is enough to show uniqueness of the strong solution of the previous equation.  Let  $Y_t(k):=\widehat X_t(k)e^{\lambda_k t}$. From the above identity, we obtain the Schr\"odinger equation
\begin{equation}\label{eq:schor}
dY_t(k)= i \theta_k Y_t(k) d {\mcb B}_t,
\end{equation}
whose solutions are of the form
$$
Y_t(k)=Y_0(k)+\int_0^t i \theta_k Y_s(k) d{\mcb B}_t.
$$
Let us assume that there exist two strong solutions $Y_t^1(k)$ and $Y_t^2(k)$ of the above equation starting from the same initial condition. Define $\Phi_t(k):=Y_t^1(k)-Y_t^2(k)$ and $\varphi_t(k)=\bb E\left[\|\Phi_t(k)\|^2\right]$. Thus,
\begin{flalign*}
\varphi_t(k)=\bb E\| \Phi_t(k)\|^2 &=\bb E\left[ \Big\| \int_0^t  i \theta_k \Phi_s(k) d{\mathcal B}_s \Big\|^2\right]= \theta_k^2 \int_0^t \left\| \Phi_s(k)\right\|^2 ds=  \theta_k^2 \int_0^t  \varphi_s(k) ds.
\end{flalign*}
Note that in the third identity above, we used the It\^o's isometry. 
By Gr\"onwall's Lemma and since $\varphi_0(k)=Y_0^1(k)-Y_0^2(k)=0$, we conclude that $\varphi_t(k)=0$ for any $t \geq 0$ and any $k \in \bb Z$. Therefore, the strong solution of the Schr\"odinger equation \eqref{eq:schor} is unique and thus the strong solution of \eqref{arica} is unique.

Now we prove that every solution of the martingale problem given in Definition \ref{def:mart_problem} is a strong solution. 
To that end,  let $\{X_t\}_{t \geq 0}$ be a solution of that martingale problem.  Observe that $\{\mcb M_t(f)\}_{ t \geq 0}$ is a continuous martingale, so, in particular, the representation theorem and the It\^o's formula hold for these martingales. Fix a countable  subset $\mcb D$ of $C^\infty(\bb T)$. 
For each $f \in \mcb D$, let $\{B_t(f)\}_{ t \geq 0}$ be a Brownian motion such that
\[
\mcb M_t(f)  = \int_0^t \sigma X_s( \nabla f) dB_s(f)
\]
for every $t \geq 0$. By the representation theorem, the Brownian motion $\{B_t(f)\}_{ t \geq 0}$ exists in an extended probability space. The importance of  $\mcb D$ to be countable is to guarantee that  all the Brownian motions $\{B_t(f)\}_{ t \geq 0, f \in \mcb D}$ are defined on the same extended space. Our aim is to prove that we can choose $\{B_t(f)\}_{ t \geq 0}$  independently of $f$. For $f,g \in \mcb D$, from the polarization identity and  \eqref{QVofmp},  the joint quadratic variation of $\{\mcb M_t(f)\}_{ t \geq 0}$ and $\{\mcb M_t(g)\}_{ t \geq 0}$ can be computed as follows
\begin{equation}
\label{coquimbo}\begin{split}
\langle \mcb M(f), \mcb M(g)\rangle_t =& \frac{1}{2}\Big(\langle \mcb M(f)+\mcb M(g)\rangle_t-\langle \mcb M(f)\rangle_t-\langle \mcb M(g)\rangle_t \Big)\\
=&\int_0^t \sigma^2\Big(X_s(\nabla f )+X_s (\nabla g)\Big)^2 ds- \int_0^t \sigma^2X_s(\nabla f )^2 ds-\int_0^t \sigma^2X_s (\nabla g)^2 ds\\=&
\int_0^t \sigma^2X_s( \nabla f) \,X_s(\nabla g) ds.\end{split}
\end{equation}
On the other hand,  from the representation theorem (see Theorem 3.4.2 of \cite{KS}), the joint quadratic variation is given by 
\begin{equation}
\label{laserena}
\langle \mcb M(f), \mcb M(g)\rangle_t = \int_0^t \sigma^2 X_s(\nabla f) X_s(\nabla g) \,d \langle B(f), B(g)\rangle_s.
\end{equation}
Let us assume for a moment that $X_t(\sigma \nabla f) \,X_t(\sigma \nabla g) \neq 0$ for every $t \geq 0$. If this is the case, then \eqref{coquimbo} and \eqref{laserena} imply that
\[
\< B(f), B(g)\>_t = t.
\]
Another application of the polarization identity gives that $\<B(f) -B(g)\>_t=0$, from where we conclude that $B_t(f) = B_t(g)$ for every $t \geq 0$. However, we do not have any \emph{a priori} reason to assume that $X_t(\sigma \nabla f) \,X_t(\sigma \nabla  g) \neq 0$ for every $t \geq 0$, nor we can derive this fact from the martingale property. Therefore, we will proceed more carefully. Let $\{f_i\}_{ i \in \bb N}$ be an enumeration of $\mcb D$ and let $\{A_i\}_{ i \in \bb N}$ be given on $i\in\mathbb N$ by
\[
A_i := \{t \geq 0; \, X_t(\sigma \nabla f_i) =0\}.
\]
The representation theorem states that $\{B_t(f)\}_{ t \geq 0}$ admits the representation
\[
B_t(f_i) = \int_0^t \Big( \mathbb{1}_{\{s \notin A_i\}} X_s(\sigma \nabla f_i)^{-1} d \mcb M_s(f_i) + \mathbb{1}_{\{s \in A_i\}}  d {B}^i_s \Big),
\]
where $\{B_t^i\}_{ t \geq0}$ is an auxiliary Brownian motion. According to the representation theorem, we have the freedom to define this auxiliary Brownian motion using a Brownian motion defined in our original filtered space, using a Brownian motion independent of our original filtered space, or using a combination of both procedures. We adopt the latter option, as we explain below. 

Let $A_0:= [0,\infty)$, and define the sets $\{C_i\}_{ i \in \bb N}$ as
\[
C_i:= \bigcap_{j=0}^{i-1}A_j \setminus A_i,
\]
and let $C_\infty : = \cap_{i \in \bb N} A_i$. Observe that $C_1=A_0\setminus A_1$, $C_2=(A_0\cap A_1)\setminus A_2$ and so on.   Note that $\{C_i\}_{ i \in \bb N\cup \infty}$ is a partition of $A_0$.
Let $\{W_s\}_{s\geq 0}$ be a Brownian motion independent of $\{B_s^i\}_{s\geq 0}$ and $\{\mcb M_s(f_i)\}_{s\geq0}$, for all $i\geq 1$. Define 
\[
\widetilde{B}_t(f_1) := \int_0^t \Big( \sum_{i=1}^\infty \mathbb{1}_{\{s \in C_i\}} X_s(\sigma \nabla f_i)^{-1} d \mcb M_s(f_i) + \mathbb{1}_{\{s \in C_\infty\}} d {W_s} \Big).
\]
From the representation theorem, we have that
\[
\mcb M_t(f_1) = \int_0^t X_s(\sigma \nabla f_1) d \widetilde{B}_s(f_1).
\]
Recall that by \eqref{coquimbo} and \eqref{laserena}, $d B_t(f_1) = d B_t(f_2)$ for $t \notin A_1 \cup A_2$. We also have that 
\[
d \widetilde{B}_t(f_1) = d {B}_t(f_1) \text{ for } t \notin A_1 \cup A_2,
\]
\[
d \widetilde{B}_t(f_1) = d B_t(f_2) \text{ for } t \in A_1 \setminus A_2,
\]
from where we conclude that $d \widetilde{B}_t(f_1) = d B_t(f_2)$ for $t \notin A_2$. Let us define 
\[
\widetilde{B}_t(f_2) := \int_0^t \Big( \mathbb{1}_{\{ s \notin A_2\}} d B_s(f_2) + \mathbb{1}_{\{s \in A_2\}} d \widetilde{B}_s(f_1)\Big)
\]
and observe that
\[
\mcb M_t(f_2) = \int_0^t X_s(\sigma \nabla f_2) d \widetilde{B}_s(f_2)
\]
and $\widetilde{B}_t(f_1) = \widetilde{B}_t(f_2)$ for every $t \geq 0$.
Inductively, we can define
\[
\widetilde{B_t}(f_{\ell+1}) := \int_0^t \Big( \mathbb{1}_{\{s \notin A_{\ell+1}\}} d B_s(f_{\ell+1}) + \mathbb{1}_{\{s \in A_{\ell+1}\}} d \widetilde{B}_s(f_\ell)\Big)
\]
and we can check that $\widetilde{B}_t(f_{\ell+1}) = \widetilde{B}_t(f_\ell)$ for every $t \geq 0$. We conclude that $\widetilde{B}_t:= \widetilde{B}_t(f_\ell)$ does not depend on $\ell$ and 
\[
\mcb M_t(f_\ell) = \int_0^t X_s(\sigma \nabla f_\ell) d \widetilde{B}_s(f_\ell)
\]
for every $\ell \in \bb N$. This shows that the solution of the martingale problem $\{X_t\}_{ t \geq 0}$ of \eqref{arica} satisfies
\[
X_t(f) = X_0(f) + \int_0^t X_s(\nu \Delta f - \gamma \nabla f) ds + \int_0^t X_s(\sigma \nabla f) d \widetilde{B}_s
\]
for every $f \in \mcb D$. Observe that in order to show uniqueness of strong solutions of \eqref{arica}, we only need to verify \eqref{iquique} for the trigonometric functions $\{g_k\}_{ k \in \bb Z}$.
Choosing $\mc D =: \{ \textrm{Re} (g_k), \textrm{Im} (g_k)\}_{ k \in \bb Z}$, we conclude that every martingale solution of \eqref{arica} is also a strong solution of \eqref{arica}, from where uniqueness follows.

\begin{center}
\textbf{Acknowledgments}
\end{center}
P.G. thanks  FCT/Portugal for financial support
through CAMGSD, IST-ID, projects UIDB/04459/2020 and UIDP/04459/2020.  R.M. thanks the National Council for Scientific and Technological Development (CNPq). M.J.~has been funded by  CNPq grant 201384/2020-5 and FAPERJ grant E-26/201.031/2022. 
This work was partially supported by the National Council for Scientific and Technological Development (CNPq),  by the Simons Foundation and by the Mathematisches Forschungsinstitut Oberwolfach.
This project has received funding from the European Research Council (ERC) under the European Union's Horizon $2020$ research and innovative programme (grant agreement No $715734$).

\end{document}